\newtheorem{thm}{Theorem}[section]
\newtheorem{cor}[thm]{Corollary}
\newtheorem{lem}[thm]{Lemma}
\newtheorem{prop}[thm]{Proposition}
\newtheorem{conj}[thm]{Conjecture}
\theoremstyle{definition}
\theoremstyle{remark}
\newtheorem{rem}[thm]{Remark}
\numberwithin{equation}{section}
\newcommand{\Ff}{\mathbb{F}}
\newcommand{\Z}{\mathbb{Z}}
\newcommand{\F}{\mathcal{F}}
\newcommand{\Pp}{\mathbb{P}}
\newcommand{\Res}{\textnormal{Res}}
\newcommand{\Hh}{\mathcal{H}}
\newcommand{\G}{\mathcal{G}}
\newcommand{\Q}{\mathcal{Q}}
\newcommand{\supp}{\textnormal{supp}}
\newcommand{\lcm}{\textnormal{lcm}}
\newcommand{\Tr}{\textnormal{Tr}}
\newcommand{\dev}{\textnormal{dev}}
\newcommand{\biq}{\textnormal{biq}}
\begin{document}

\title[Expected Value of High Powers of Trace of Frobenius of Biquadratic Curves Over a Finite Field]{Expected Value of High Powers of Trace of Frobenius of Biquadratic Curves Over a Finite Field}%
\author{Patrick Meisner}%


\begin{abstract}

Denote $\Theta_C$ as the Frobenius class of a curve $C$ over the finite field $\Ff_q$. In this paper we determine the expected value of $\Tr(\Theta_C^n)$ where $C$ runs over all biquadratic curves when $q$ is fixed and $g$ tends to infinity. This extends work done by Rudnick \cite{R} and Chinis \cite{C} who separately looked at hyperelliptic curves and Bucur, Costa, David, Guerreiro and Lowry-Duda \cite{BCDG+} who looked at $\ell$-cyclic curves, for $\ell$ a prime, as well as cubic non-Galois curves.

\end{abstract}
\maketitle

\section{Introduction}

Let $C$ be a curve defined over $\Ff_q$, the finite field of $q$ elements, of genus $g$. Denote $N_n(C)$ as the number of $\Ff_{q^n}$-points on $C$. Then, for $|u|<q^{-1}$, the zeta function of $C$ is defined as
\begin{align}\label{zetafunc}
Z_C(u) = \exp\left(\sum_{n=1}^{\infty} N_n(C) \frac{u^n}{n}\right).
\end{align}
The Riemann Hypothesis, proved by Weil \cite{W}, states that $Z_C(u)$ is a rational function in $u$ of the form
\begin{align}\label{ratzetafunc}
Z_C(u) = \frac{P_C(u)}{(1-u)(1-qu)}
\end{align}
where $P_C(u)$ is a polynomial of degree $2g$ with integer coefficients satisfying the functional equation
$$P_C(u) = (qu^2)^gP_C\left(\frac{1}{qu}\right)$$
and whose roots lie on the circle $|u|=q^{-1/2}$.

These conditions imply that we can find a unique conjugacy class of a unitary symplectic $2g\times2g$ matrix, $\Theta_C$, such that $P_C(u) = \det(I - uq^{-1/2}\Theta_C)$. Hence the eigenvalues of $\Theta_C$ correspond to the zeros of $P_C(u)$. We call the conjugacy class of $\Theta_C$ the Frobenius class of $C$. Further, using this and equating \eqref{zetafunc} and \eqref{ratzetafunc} we can write
\begin{align}\label{numpts1}
N_n(C) = q^n+1-q^{n/2}\Tr(\Theta^n_C).
\end{align}
Therefore, studying statistics of $N_n(C)$ is equivalent to studying statistics of $\Tr(\Theta^n_C)$.

Katz and Sarnak \cite{KS} showed that for many families of curves of fixed genus $g$ the Frobenius class of $C$ becomes equidistributed in $USp(2g)$ as $q$ tends to infinity. There has been much research in recent years in examining the statistics if we fix $q$ and let the genus tend to infinity. This was first discussed by Kurlberg and Rudnick in \cite{KR} in which they considered the distribution of $\Tr(\Theta_C)$ over the family of hyperelliptic curves. This was extended by various authors to several different families \cite{BDFK+,BDFL1,BDFL2,LMM,M1,M2,M3}. For all cases, the statistics can be given as a sum of $q+1$ random vairables.

Later, Rudnick \cite{R} examined the expected value of $\Tr(\Theta_C^n)$ for the hyperelliptic ensemble as the genus tends to infinity and $q$ is odd. That is, let $\F_{2g+1}$ be the family of hyperelliptic curves given by the affine model
$$Y^2=F(X)$$
where $F$ is a square-free polynomial of degree $2g+1$. Then we have the following theorem.

\begin{thm}\label{rudnick}{\cite{R}}
Let $q$ be odd. If $3\log_q(g) < n < 4g-5\log_q(g)$ and $n\not=2g$, then as $g\to\infty$
$$\langle\Tr(\Theta_C^n)\rangle_{\F_{2g+1}} = \int_{USp(2g)} \Tr(U^n) dU + o\left(\frac{1}{g}\right).$$
\end{thm}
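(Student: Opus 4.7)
The plan is to convert $\Tr(\Theta_C^n)$ into a weighted character sum over $\Ff_q[T]$ and then average over the family $\F_{2g+1}$. Writing $\chi_F = \left(\frac{F}{\cdot}\right)$ for the quadratic symbol, the numerator of the zeta function of $Y^2=F(X)$ equals the Dirichlet $L$-function $L(u,\chi_F)$, so taking logarithmic derivatives yields the explicit formula
$$\Tr(\Theta_C^n) = -q^{-n/2}\sum_{\substack{f\text{ monic}\\ \deg f = n}} \Lambda(f)\,\chi_F(f),$$
where $\Lambda$ is the polynomial von Mangoldt function. Exchanging averaging and summation reduces Theorem~\ref{rudnick} to controlling the inner average $A_f := \langle \chi_F(f)\rangle_{F\in\F_{2g+1}}$ uniformly in monic $f$ of degree $n$.

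Next I would split the $f$-sum according to whether $f$ is a perfect square. For $f=h^2$ one has $\chi_F(h^2)=\mathbf{1}[\gcd(h,F)=1]$, and a standard sieve on square-free polynomials of fixed degree gives $A_{h^2}=\prod_{P\mid h}\bigl(1-(|P|+1)^{-1}\bigr)+O\bigl(q^{-(2g+1)}\bigr)$. Inserting this and invoking the prime polynomial theorem, the square contribution asymptotically produces exactly $\int_{USp(2g)}\Tr(U^n)\,dU$, the Katz--Sarnak main term. For $f$ not a square, I would apply quadratic reciprocity in $\Ff_q[T]$ to flip $\chi_F(f)$ into $\chi_{f^{\ast}}(F)$, up to an explicit sign depending only on $n$ and $g$, where $f^{\ast}$ is the square-free part of $f$. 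The average of this non-trivial character over $\F_{2g+1}$ is then controlled by the Weil bound for short character sums over polynomials, yielding cancellation of size roughly $q^{-(2g+1-\deg f^{\ast})/2}$.

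The main obstacle will be uniform control of the non-square contribution across the full range $3\log_q g<n<4g-5\log_q g$. The lower endpoint is needed to absorb polynomial-in-$g$ losses coming from the sieve step for the squares, while the upper endpoint is precisely where the Weil cancellation starts to saturate against the trivial bound on character sums. The exclusion $n\neq 2g$ reflects a boundary case in which the square polynomials of degree $n$ sit exactly at the degree of the family $\F_{2g+1}$, causing a secondary term in $A_{h^2}$ to survive that is not absorbed by the matrix integral. Assembling the two pieces — the square part matching $\int_{USp(2g)}\Tr(U^n)\,dU$ exactly and the non-square part estimated by $o(1/g)$ — then yields the claimed asymptotic.
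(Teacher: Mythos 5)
This is Rudnick's theorem, which the paper cites from \cite{R} without reproducing a proof, so there is no internal argument to compare against; your sketch does follow the overall skeleton of Rudnick's approach (explicit formula, splitting the von Mangoldt sum into perfect-square and non-square prime powers, a sieve for the squares, reciprocity plus character-sum bounds for the rest). For what it is worth, the paper under review proves its own analogue, Theorem~\ref{mainthm}, by a genuinely different route — direct point counts converted to sums over degree-$n$ primes via $\Ff_q[X]/(P_x)\cong\Ff_{q^n}$, then generating functions and contour integration in the style of \cite{BCDG+} — which trades the long range $n<4g$ for a method that generalizes to other families but only reaches $n=o(g^{2/3})$.

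There is, however, a real gap in your handling of the non-square part, and it affects exactly the interesting upper half of the range. For even $n$ with $2g<n<4g$ the target is $\int_{USp(2g)}\Tr(U^n)\,dU=0$, yet the square prime powers of degree $n$ alone contribute $\approx -1$ to $-q^{-n/2}\sum_{\deg f=n}\Lambda(f)\chi_F(f)$ for \emph{every} $F$, irrespective of $g$. So in that regime the non-square sum cannot be ``estimated by cancellation of size $q^{-(2g+1-\deg f^{\ast})/2}$''; it must itself supply a compensating main term $+1$, and a bound that only shows it is small destroys the result. Extracting that $+1$ is the heart of Rudnick's argument: after expanding $\mu^2(F)$ by M\"obius and applying reciprocity, the inner sums $\sum_{\deg B=m}\chi_{f^{\ast}}(B)$ are coefficients of the \emph{polynomial} $L(u,\chi_{f^{\ast}})$ of degree $\deg f^{\ast}-1$, hence vanish for $m\geq\deg f^{\ast}$, and the functional equation for this $L$-polynomial is what regenerates the main term once $n>2g$. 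A plain Weil bound, as you propose, gives only $n<2g-C\log_q g$, not $n<4g-5\log_q g$. Relatedly, your explanation of the exclusion $n\neq 2g$ is off: a perfect square of degree $2g$ has square root of degree $g$, nowhere near the family degree $2g+1$. The true obstruction is that $2g$ is exactly the degree of the $L$-polynomial of $\chi_F$, the boundary between the two behaviors, where a secondary term (of size $1/(q-1)$, not $o(1/g)$) survives.
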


It is known exactly what these integrals evaluate to:
$$\int_{USp(2g)} \Tr(U^n) du = \begin{cases} -\eta_n & 1\leq n \leq 2g \\ 0 & n>2g  \end{cases}$$
where
$$\eta_n = \begin{cases} 1 & n \mbox{ is even} \\ 0 & n \mbox{ is odd} \end{cases}.$$
Therefore, since the theorem holds for $n<4g-5\log_q(g)$ we can see the dichotomy coming from $USp(2g)$ occuring in the family of curves.

Rudnick uses this to compute the one-level density. Let $f$ be an even Schwartz function and for any $N>1$ define
$$F(\theta) = \sum_{k\in\Z} f\left(N\left(\frac{\theta}{2\pi}-k\right)\right),$$
which has a period of $2\pi$. For a unitary $N\times N$ matrix $U$ with eigenvalues $e^{i\theta_j}$, $j=1,\dots,N$ denote
$$Z_f(U) = \sum_{j=1}^N F(\theta_j).$$

\begin{cor}{\cite{R}}
If $f$ is an even Schwartz function with Fourier transform $\hat{f}$ supported in $(-2,2)$, then as $g\to\infty$
$$\langle Z_f(\Theta_C)\rangle_{\F_{2g+1}} = \int_{USp(2g)} Z_f(U) dU + \frac{\dev(f)}{g} + o\left(\frac{1}{g}\right)$$
where
$$\dev(f) = \hat{f}(0)\sum_P \frac{\deg(P)}{|P|^2-1} - \frac{\hat{f}(1)}{q-1}$$
the sum being over all irreducible, monic polynomials in $\Ff_q[X]$.
\end{cor}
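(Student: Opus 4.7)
The plan is to reduce $Z_f$ to a sum of power-traces via Fourier expansion, apply Theorem \ref{rudnick} to the bulk of the trace sum, and extract $\dev(f)$ from the boundary contributions. By Poisson summation,
$$F(\theta) = \frac{1}{2g}\sum_{n\in\Z}\hat{f}\!\left(\frac{n}{2g}\right)e^{in\theta},$$
so using $\Tr(U^{-n})=\Tr(U^n)$ for $U\in USp(2g)$ and the support hypothesis $\supp(\hat{f})\subset(-2,2)$, one has
$$Z_f(U) = \hat{f}(0) + \frac{1}{g}\sum_{n=1}^{4g-1}\hat{f}\!\left(\frac{n}{2g}\right)\Tr(U^n).$$
Taking the average over $\F_{2g+1}$ and subtracting the $USp(2g)$ integral yields
$$\langle Z_f(\Theta_C)\rangle_{\F_{2g+1}} - \int_{USp(2g)}Z_f(U)dU = \frac{1}{g}\sum_{n=1}^{4g-1}\hat{f}\!\left(\frac{n}{2g}\right)\Delta_n,$$
where $\Delta_n := \langle\Tr(\Theta_C^n)\rangle_{\F_{2g+1}} - \int_{USp(2g)}\Tr(U^n)dU$.

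I would then split the $n$-sum into four regions. In the central range $3\log_q g < n < 4g-5\log_q g$ with $n\neq 2g$, Theorem \ref{rudnick} gives $\Delta_n = o(1/g)$ uniformly (from the effective form of Rudnick's estimate), and combined with the $1/g$ prefactor and the $O(g)$ terms this contributes $o(1/g)$ overall. In the upper tail $n\geq 4g-5\log_q g$, the Schwartz property together with $\hat{f}(2)=0$ forces $\hat{f}(n/(2g))$ to be small, and the trivial bound $|\Delta_n|\leq 4g$ makes this range negligible. The remaining contributions -- from the low-$n$ range and from the special point $n=2g$ -- will produce $\dev(f)/g$.

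For low $n$, I would use the explicit formula
$$q^{n/2}\Tr(\Theta_C^n) = -\sum_{\substack{f\text{ monic}\\\deg f = n}}\Lambda(f)\chi_F(f),$$
reducing the problem to computing $\langle\chi_F(f)\rangle_F$ over squarefree $F$ of degree $2g+1$. This average is non-negligible only when $f = h^2$ is a perfect square, in which case it equals $\prod_{P\mid h}|P|/(|P|+1) + O(q^{-g})$. Extracting the $k=1$ prime-power contribution reproduces the $-\eta_n$ main term; the subleading contributions, coming from prime powers $h=P^k$ with $k\geq 2$, can be summed over $n$, the order of summation exchanged with the sum over primes, and collapsed by a geometric-series identity into $\hat{f}(0)\sum_P \deg(P)/(|P|^2-1)/g$. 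At the endpoint $n=2g$ the character-sum average picks up an anomalous contribution tied to the functional equation of $L(u,\chi_F)$ (which relates $n\geq 2g$ behaviour to small-$n$ behaviour), yielding $\Delta_{2g}\to -1/(q-1)$ and hence the $-\hat{f}(1)/((q-1)g)$ piece.

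The main technical obstacle is the $n=2g$ computation: this point lies precisely at the centre of the functional equation for $L(u,\chi_F)$, so the ``diagonal'' and ``dual'' halves of the character-sum average are of comparable size and must be balanced carefully -- this is exactly why Theorem \ref{rudnick} excludes $n=2g$. A secondary difficulty is verifying that Rudnick's $o(1/g)$ estimate in the bulk is genuinely uniform in $n$ (so the $O(g)$ terms do not accumulate), and that the low-$n$ rearrangement produces exactly $\sum_P \deg(P)/(|P|^2-1)$ with the correct sign rather than a nearby but distinct Euler-product expression.
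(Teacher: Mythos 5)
The statement you are proving is quoted in the paper from Rudnick \cite{R} and is not reproved there; the closest analogue in the paper is the conditional argument in Section \ref{density}, which carries out the same Poisson--summation reduction of $Z_f$ to a weighted sum of $\Tr(\Theta_C^n)$ but, lacking an effective version of the trace estimate, only reaches precision $o(1)$. Your overall strategy --- expand $Z_f$ via Poisson summation, insert an explicit expected-value formula for $\Tr(\Theta_C^n)$ built on $\langle\chi_F(f)\rangle$, isolate the diagonal $f=h^2$ contribution, interchange the $n$-sum with the prime sum, and treat $n=2g$ via the functional equation --- is indeed Rudnick's, and your identification of the $n=2g$ anomaly as the source of $-\hat f(1)/(q-1)$ is correct. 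Note also that the upper tail is simpler than you suggest: since $\supp(\hat f)$ is a compact subset of the open interval $(-2,2)$, $\hat f(n/2g)$ vanishes identically for $n\ge (4-2\epsilon)g$, so that range is covered by Theorem~\ref{rudnick} together with literal vanishing, not a smallness estimate.

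The genuine gap is in your split of the diagonal contribution into ``$k=1$ gives $-\eta_n$'' and ``$k\ge 2$ gives $\hat f(0)\sum_P\deg(P)/(|P|^2-1)$''. Neither half is right. The $k=1$ piece (i.e.\ $h=P$, $\deg P=n/2$) is
$$-\frac{1}{q^{n/2}}\sum_{\deg P = n/2}\deg(P)\,\frac{|P|}{|P|+1}
= -\frac{(n/2)\pi_q(n/2)}{q^{n/2}+1}
= -1 + \frac{1}{q^{n/2}+1} + O\!\left(q^{-n/4}\right),$$
so it contributes not just the main term $-\eta_n$ but also a non-negligible piece of the secondary term. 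Conversely, summing only the $k\ge 2$ pieces over $n$ and interchanging gives $\sum_P \deg(P)/\bigl(|P|(|P|^2-1)\bigr)$, which is \emph{not} the advertised Euler sum --- precisely the ``nearby but distinct Euler-product expression'' you worried about. The clean way to organize this is to write $\frac{|P|}{|P|+1} = 1 - \frac{1}{|P|+1}$ in
$$\langle\Tr(\Theta_C^n)\rangle_{\mathrm{diag}}
= -\frac{1}{q^{n/2}}\sum_{\substack{P,\,m\ge 1\\ m\deg P = n/2}}\deg(P)\,\frac{|P|}{|P|+1}.$$
The ``$1$'' part collapses exactly to $-q^{-n/2}\sum_{\deg h=n/2}\Lambda(h) = -1$ (the main term, with no error), and the ``$\frac{1}{|P|+1}$'' part, summed over all $m\ge 1$ and then over even $n$ against $\hat f(n/2g)/g$, telescopes via
$$\sum_{m\ge 1}\frac{1}{|P|^m} = \frac{1}{|P|-1}, \qquad
\frac{1}{|P|+1}\cdot\frac{1}{|P|-1} = \frac{1}{|P|^2-1},$$
to produce exactly $\frac{\hat f(0)}{g}\sum_P \frac{\deg P}{|P|^2-1}$. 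In other words, the correct main-term extraction must use $\sum_{\deg h = n/2}\Lambda(h) = q^{n/2}$ rather than $\pi_q(n/2)\approx q^{n/2}/(n/2)$, since the deviations of those approximations are the very source of part of $\dev(f)$.
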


Chinis \cite{C} extends this to the whole moduli space. That is, let $\Hh_{g,2}$ denote the moduli space of hyperelliptic curves of genus $g$ (i.e curves with affine model $Y^2=F(X)$ where $F$ is a polynomial of degree $2g+1$ or $2g+2$).

\begin{thm} \label{chinis} {\cite{C}}
Let $q$ be odd. If $n$ is odd, then
$$\langle\Tr(\Theta_C^n)\rangle_{\Hh_{g,2}} = \int_{USp(2g)} \Tr(U^n) du.$$
For $n$ even and $3\log_q(g)<n<4g-5\log_q(g)$ then as $g\to\infty$
$$\langle\Tr(\Theta_C^n)\rangle_{\Hh_{g,2}} = \int_{USp(2g)} \Tr(U^n) du + o\left(1\right).$$
\end{thm}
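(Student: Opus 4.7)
The plan is to extend Rudnick's argument for $\F_{2g+1}$ (Theorem \ref{rudnick}) to the full moduli space $\Hh_{g,2}$; what is new is the family of curves $Y^2=F(X)$ with $\deg F = 2g+2$. Parametrizing $\Hh_{g,2}$ appropriately as a union of squarefree polynomial families of degrees $2g+1$ and $2g+2$ and writing $N_n(C) = q^n + S_n(F) + N_\infty(F,n)$, where $S_n(F) = \sum_{x\in\Ff_{q^n}}\chi_2(F(x))$ with $\chi_2$ the quadratic character, \eqref{numpts1} yields
$$q^{n/2}\Tr(\Theta_C^n) = -S_n(F) - \bigl(N_\infty(F,n)-1\bigr).$$
Here $N_\infty(F,n) = 1$ when $\deg F = 2g+1$; $N_\infty(F,n) = 2$ when $\deg F = 2g+2$ and $n$ is even; and $N_\infty(F,n) = 1+\chi_2(\mathrm{lc}(F))$ when $\deg F = 2g+2$ and $n$ is odd.

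For $n$ odd, I would exhibit an exact pairwise cancellation via quadratic twist. Fix a non-square $\alpha\in\Ff_q^*$ and pair $C:Y^2=F(X)$ with its twist $C':Y^2=\alpha F(X)$. Since $\alpha$ remains a non-square in $\Ff_{q^n}$ for $n$ odd, one obtains $S_n(\alpha F) = -S_n(F)$ and $\chi_2(\mathrm{lc}(\alpha F)) = -\chi_2(\mathrm{lc}(F))$, so $\Tr(\Theta_C^n) + \Tr(\Theta_{C'}^n) = 0$ curve-by-curve. The pairing is an involution on $\Hh_{g,2}$ (away from the negligible locus of curves with extra automorphisms), which yields the claimed exact equality with $\int_{USp(2g)}\Tr(U^n)dU=0$.

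For $n$ even, I would run Rudnick's explicit formula strategy, extended. Using $P_C(u) = L(u,\chi_F)$ and taking logarithmic derivatives gives
$$q^{n/2}\Tr(\Theta_C^n) = -\sum_{\substack{f\text{ monic}\\\deg f = n}}\Lambda(f)\chi_F(f) - \bigl(N_\infty(F,n)-1\bigr),$$
which I would average over squarefree $F$ of degrees $2g+1$ and $2g+2$ in turn. The diagonal contribution from $f=h^2$ coprime to $F$ produces the main term matching $\int_{USp(2g)}\Tr(U^n)dU = -1$ in the stated range. The off-diagonal terms (with $f$ not a square) are controlled by function-field Weil/Jutila-type bounds on $\sum_F \chi_F(f)$ and are $o(1)$ provided $n<4g-5\log_q(g)$. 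The boundary correction $N_\infty(F,n)-1 = O(1)$ contributes $O(q^{-n/2}) = O(g^{-3/2}) = o(1)$ using $n>3\log_q(g)$.

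The main obstacle will be the character-sum analysis on the degree-$(2g+2)$ family, where the leading coefficient of $F$ is a free parameter in $\Ff_q^*$ and the configuration at infinity depends on its square class. Rudnick's Jutila-type estimates must be re-proved with an additional average over the leading coefficient, and it is this step that produces the weakening of Rudnick's $o(1/g)$ to the $o(1)$ error in Theorem \ref{chinis}.
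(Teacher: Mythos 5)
Theorem \ref{chinis} is quoted from Chinis \cite{C}; the present paper does not reprove it, so there is no internal proof to measure your attempt against. That said, your outline is sound and closely parallels the methodology the paper applies to its own Theorem \ref{mainthm}. Your key observation for odd $n$ --- that $\Hh_{g,2}$, unlike Rudnick's monic family $\F_{2g+1}$, is stable under quadratic twist, and that twisting negates $S_n(F)$ when $n$ is odd because a non-square $\alpha\in\Ff_q^*$ stays a non-square in $\Ff_{q^n}$ --- is precisely the mechanism the paper uses in Section \ref{Oddn}, where it decomposes the family by monic part and leading coefficient and observes that $\sum_{\alpha\in\Ff_q^*}\chi_2(\alpha)=0$. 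That ``sum over the leading coefficient'' framing is slightly cleaner than your pairing $F\leftrightarrow\alpha F$, which is an involution only on isomorphism classes (not on polynomials, since $\alpha^2F\neq F$); but the two arguments are equivalent, and your caveat about fixed points is harmless because any curve isomorphic to its own twist automatically has $\Tr(\Theta_C^n)=0$. Your even-$n$ outline --- logarithmic derivative of $L(u,\chi_F)$, diagonal $f=h^2$ giving $-1$, off-diagonal controlled by Weil and Jutila-type bounds in the range $n<4g-5\log_q g$, boundary correction $N_\infty-1=O(1)$ contributing $O(q^{-n/2})=o(1)$ for $n>3\log_q g$ --- is the right shape and matches the Rudnick--Chinis strategy, with the $\deg F=2g+2$ piece and the leading-coefficient average being the genuinely new content. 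One caveat on your closing paragraph: the remark immediately following Theorem \ref{chinis} indicates that Chinis also obtains explicit secondary terms comparable to Rudnick's, so the $o(1)$ in the statement is a coarser formulation chosen for this paper's purposes rather than a loss forced by the even-degree family; attributing the ``weakening'' to the re-proved Jutila estimates is therefore speculative and probably not the actual explanation.
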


\begin{rem}
Both Rudnick and Chinis gave more explicit results for the expectation over their families. They differ slightly, however they yield the same result in the $g$ limit.
\end{rem}

Bucur, Costa, David, Guerreiro and Lowry-Duda \cite{BCDG+} used a method different than Rudnick and Chinis to study the moduli space of cyclic $\ell$ covers of genus $g$, denoted $\Hh_{g,\ell}$, when $\ell$ is prime and $q\equiv 1 \mod{\ell}$. That is, they considered curves with affine model
$$Y^\ell = F(X)$$
of genus $g$.  In the case where $\ell=2$, they obtain the same results as Rudnick and Chinis only in a smaller range of $n$: $4\log_q(g)<n<2g(1-\epsilon)$, for some $\epsilon>0$. However, for $\ell$ an odd prime, they get a new result.

\begin{thm}\label{BCDG+}{\cite{BCDG+}}
Let $\ell$ be an odd prime and $q\equiv 1 \mod{\ell}$. For any $\epsilon>0$ and $n$ such that $6\log_q(g) < n < (1-\epsilon)\left(\frac{2g}{\ell-1}+2\right)$, as $g\to\infty$ we have
$$\langle\Tr(\Theta_C^n)\rangle_{\Hh_{g,\ell}} = \int_{U(2g)}\Tr(U^n)dU + O\left(\frac{1}{g}\right).$$
Let $f$ be an even Schwartz function with fourier coefficient $\hat{f}$ whose support is contained in $\left(\frac{-1}{\ell-1},\frac{1}{\ell-1}\right)$, then
$$\langle Z_f(\Theta_C)\rangle_{\Hh_{g,\ell}} = \int_{U(2g)} Z_f(U)dU - \hat{f}(0)\frac{\ell-1}{g} \sum_{v} \frac{\deg(v)}{(1+(\ell-1)q^{-\deg(v)})(q^{\ell\deg(v)/2}-1)} + O\left(\frac{1}{g^{2-\epsilon}}\right),$$
where the sum over all places $v$ of $\Ff_q(X)$.
\end{thm}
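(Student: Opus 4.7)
The plan is to adapt the approach used in the excerpt's earlier results to the $\ell$-cyclic setting. The starting point is the identity \eqref{numpts1}, which converts the study of $\Tr(\Theta_C^n)$ to that of $N_n(C)$. Since $\ell$ is prime and $q\equiv 1 \pmod{\ell}$, Kummer theory parametrizes $\Hh_{g,\ell}$ (up to a finite group action) by the set of $\ell$-power-free polynomials $F\in\Ff_q[X]$ of an appropriate degree determined by Riemann--Hurwitz (namely $\deg F\approx \tfrac{2g}{\ell-1}+2$). Fixing a nontrivial character $\chi$ of $\Ff_q^*$ of order $\ell$, extended by zero, one writes
\[
N_n(C) = q^n + 1 + \sum_{j=1}^{\ell-1} \sum_{x\in\Ff_{q^n}} \chi^j\bigl(N_{\Ff_{q^n}/\Ff_q}(F(x))\bigr) + O(1),
\]
with the error absorbing ramification and points at infinity.

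The next step is to exchange the order of summation and average the inner character sum over $F$. For fixed $x\in\Ff_{q^n}$, the value $\chi^j(N(F(x)))$ factors through reduction of $F$ modulo the minimal polynomial $P_x$ of $x$ over $\Ff_q$; averaging over $\ell$-power-free $F$ of fixed degree then reduces, via the generating series $\sum_F \chi^j(F)u^{\deg F}$ together with a standard inclusion--exclusion to extract the $\ell$-power-free condition, to an evaluation of a Dirichlet $L$-function of the lifted character modulo $P_x$. When the modulus is nontrivial this $L$-function is a polynomial satisfying the Riemann Hypothesis, so the corresponding contribution is negligible; the persistent pieces come from $x$ whose minimal polynomial has very small degree, and after collecting divisors of $n$ these sum to $O(1/g)$, matching the unitary prediction $\int_{U(2g)}\Tr(U^n)\,dU=0$.

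The main technical obstacle, which dictates the range of $n$, is uniform control of these averaged character sums. The Riemann Hypothesis for the $L$-function of the lifted character modulo $P_x$ gives each contribution size $O(\deg(P_x)\, q^{n/2})$, while the number of relevant moduli is governed by $\deg F\approx \tfrac{2g}{\ell-1}+2$. The upper bound $n<(1-\epsilon)(\tfrac{2g}{\ell-1}+2)$ is precisely the threshold at which the Weil savings still beats the trivial bound, whereas the lower bound $n>6\log_q(g)$ provides enough smoothing for the $1/g$ correction to be isolated; I would expect the bulk of the technical work to go into making these two bounds sharp.

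Finally, for the one-level density corollary I would apply the function-field explicit formula, expressing $Z_f(\Theta_C)$ as $\int_{U(2g)} Z_f(U)\,dU$ plus a sum of terms of the shape $\hat f(n/(2g))\, q^{-n/2}\Tr(\Theta_C^n)$. Substituting the main theorem into each term and reassembling the resulting geometric series into an Euler product over places $v$ of $\Ff_q(X)$ identifies the displayed correction; the support condition $\supp(\hat f)\subset\bigl(-\tfrac{1}{\ell-1},\tfrac{1}{\ell-1}\bigr)$ is exactly what ensures that only those $n$ within the admissible range of the main theorem enter the sum.
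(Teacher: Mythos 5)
Note first that Theorem~\ref{BCDG+} is quoted from \cite{BCDG+} as background; the present paper does not supply a proof of it, so there is no in-paper argument to compare yours against. Your sketch does, however, follow the same general strategy that both \cite{BCDG+} and the present paper use for the biquadratic case: parametrize the moduli space by $\ell$-power-free polynomials via Kummer theory, express $N_n(C)$ as a sum of order-$\ell$ character values over $\Pp^1(\Ff_{q^n})$, swap the order of summation, and control the averaged character sums by the Riemann Hypothesis for Dirichlet $L$-functions in $\Ff_q[X]$. As a high-level blueprint that is sound and is in fact the ``different method from Rudnick and Chinis'' that the introduction alludes to.

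Two of your justifications are off, though. First, the reason the leading constant is $0$ (matching $\int_{U(2g)}\Tr(U^n)\,dU=0$) for odd $\ell$ is structural, not a consequence of ``$x$ whose minimal polynomial has very small degree.'' In the quadratic setting with $q$ odd and $n$ even one has $\Ff_{q^{n/2}}^*\subset(\Ff_{q^n}^*)^{2}$ (since $(q^{n/2}+1)/2\in\Z$), so $\chi_2(F(x))=1$ for every $x\in\Ff_{q^{n/2}}$ and this ``diagonal'' block produces the nonzero constant $-\eta_n$. For $\ell$ an odd prime with $q\equiv1\pmod{\ell}$ one instead has $q^{n/2}+1\equiv 2\pmod{\ell}$, hence $\ell\nmid q^{n/2}+1$ and $\Ff_{q^{n/2}}^*\not\subset(\Ff_{q^n}^*)^{\ell}$; the diagonal contribution is absent, which is precisely why the symmetry type switches from $USp$ to $U$. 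Your sketch never isolates this dichotomy. Second, the last step --- ``substituting the main theorem into each term'' of the explicit formula and ``reassembling'' an Euler product --- cannot yield the displayed correction $-\hat f(0)\tfrac{\ell-1}{g}\sum_v \cdots$, because the stated error $O(1/g)$ in the trace expectation is already of the same order as the term you are trying to isolate. As the paper's remark after Theorem~\ref{chinis} points out for Rudnick and Chinis, one must carry the \emph{explicit} secondary term in the trace expectation (not just a one-sided $O(1/g)$ bound) and only then sum over $n$; otherwise the Euler-product correction is invisible. This is a genuine gap in the one-level-density part of your sketch.
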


Again, it is known that
$$\int_{U(2g)} \Tr(U^n)dU = 0$$
for all $n$.

A similar theorem to Theorem \ref{BCDG+} is proved for the space of cubic, non-Galois extensions in \cite{BCDG+}. The symmetry type they determine is $USp(2g)$. However, since this paper will focus on Galois extensions we omit the statement of this result for brevity.

The focus of this paper will be on the moduli space of biquadratic curves of genus $g$, denoted $\Hh_{g,\biq}$, when $q$ is odd. That is, curves with affine models of the form
$$Y_1^2 = F_1(X) \quad \quad \quad Y_2^2 = F_2(X)$$
where $F_1$ and $F_2$ are square-free polynomials such that their degrees satisfy the genus formula (see \eqref{genform} below).

Finally, before stating the main theorem define
$$USp(2g)^3 = \left\{\begin{pmatrix} U_1 & 0 & 0 \\ 0 & U_2 & 0 \\ 0 & 0 & U_3 \end{pmatrix} : U_1,U_2,U_3 \in USp(2g)\right\} \subset USp(6g).$$
Therefore,
$$\int_{USp(2g)^3} \Tr(U^n) dU = 3\int_{USp(2g)} \Tr(U^n) dU = \begin{cases} -3\eta_n & n\leq 2g \\ 0 & n>2g \end{cases}.$$

\begin{thm}\label{mainthm}
Let $q$ be odd. For $n$ odd,
$$\langle\Tr(\Theta_C^n)\rangle_{\Hh_{g,\biq}} = \int_{USp(2g)^3} \Tr(U^n) dU.$$
For $n$ even and $3\log_q(g)<n=o\left(g^{2/3}\right)$, then as $g\to\infty$
$$\langle\Tr(\Theta_C^n)\rangle_{\Hh_{g,\biq}} = \int_{USp(2g)^3} \Tr(U^n) dU + o(1).$$
\end{thm}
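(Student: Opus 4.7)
The strategy is to exploit the Artin decomposition of the $L$-polynomial of a biquadratic curve into three hyperelliptic factors, reducing the statistic to a triple of character sums, and then to apply techniques in the spirit of Rudnick~\cite{R} and Chinis~\cite{C} while handling the coupling between the three characters induced by the biquadratic parameterization.

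\textbf{Setup and explicit formula.} First I would parameterize $\Hh_{g,\biq}$ by triples $(A,B,C)$ of pairwise coprime monic squarefree polynomials whose degrees satisfy the genus formula~\eqref{genform}, so that the three quadratic subfields of $\Ff_q(X,Y_1,Y_2)/\Ff_q(X)$ are cut out respectively by $AB$, $AC$, and $BC$. The Artin formalism then yields
$$P_C(u)\;=\;L(u,\chi_{AB})\,L(u,\chi_{AC})\,L(u,\chi_{BC}),$$
and taking logarithmic derivatives, together with the identification of $P_C(u)$ in terms of $\Theta_C$ recalled in the introduction, produces the explicit formula
$$q^{n/2}\,\Tr(\Theta_C^n)\;=\;-\!\sum_{\deg f=n}\Lambda(f)\bigl(\chi_{AB}(f)+\chi_{AC}(f)+\chi_{BC}(f)\bigr).$$

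\textbf{Averaging, main term, and parity.} Exchanging expectation and summation, and invoking the $S_3$-symmetry of the parameterization, reduces the problem to evaluating $\langle\chi_{AB}(f)\rangle_{(A,B,C)}$ for each prime power $f$ of degree $n$. The main contribution comes from $f=h^2$ a perfect square in $\Ff_q[X]$: here $\chi_{AB}(h^2)=1$ whenever $\gcd(h,AB)=1$, every admissible triple contributes $+1$, and a counting argument produces exactly $-\eta_n$ per character, summing to $-3\eta_n=\int_{USp(2g)^3}\Tr(U^n)\,dU$. For odd $n$ no prime power of degree $n$ is a square, so this main term is absent, and I would expect (imitating the structure of Theorem~\ref{chinis}) that an additional symmetry in the parameter space forces exact equality with $0$ rather than only an $o(1)$ bound.

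\textbf{Error analysis and the main obstacle.} The non-square contribution to $\langle\chi_{AB}(f)\rangle_{(A,B,C)}$ is the heart of the argument. The plan is to apply quadratic reciprocity to rewrite $\chi_{AB}(f)=\pm\chi_f(A)\chi_f(B)$, expand the squarefree indicators via M\"obius, swap the order of summation, and bound the resulting incomplete character sums on $(\Ff_q[X]/f)^*$ by Weil's bound. The pairwise coprimality conditions among $A,B,C$ couple the three sums into a trilinear error whose uniform control, across three parameter ranges each of length $\approx g$, is substantially more delicate than in Rudnick's single-character setting. The main obstacle is precisely this trilinear analysis: managing the M\"obius inclusion-exclusion simultaneously with three intertwined character sums, without losing the power saving of the Weil bound, is what I expect forces the admissible range to shrink from Rudnick's $n<4g-5\log_q g$ down to $n=o(g^{2/3})$.
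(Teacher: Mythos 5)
Your starting point (the Artin factorization $P_C(u)=L(u,\chi_{AB})L(u,\chi_{AC})L(u,\chi_{BC})$ plus the explicit formula) is a legitimate alternative to the paper's point-count identity \eqref{traceform1}, and after identifying $\chi_2(F(x))=\chi_{P_x}(F)$ the two routes converge on the same object: a sum over primes $P$ of degree $n$ of $\chi_P(f_1f_2)$ (summed over the family), with the $-3\eta_n$ main term supplied by even-exponent prime powers, i.e.\ points of $\Ff_{q^{n/2}}$. The difficulty is that two pieces of your plan would not survive contact with the details.

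First, parameterizing by \emph{monic} triples $(A,B,C)$ is incorrect for this family. In the paper's $\widehat{\F}_g$ only $f_3=\gcd(F_1,F_2)$ is required to be monic; the leading coefficients of $f_1$ and $f_2$ must range over all of $\Ff_q^*$, because they determine quadratic twists of the curve and hence genuinely distinct points of $\Hh_{g,\biq}$. This is not a cosmetic normalization: the exact vanishing for odd $n$ is precisely the identity $\sum_{\alpha,\beta\in\Ff_q^*}\chi_2(\alpha)=0$ applied to those leading coefficients (Section \ref{Oddn}). Your monic parameterization erases this mechanism, and the ``additional symmetry'' you hope for does not exist in that model; at best you would get $o(1)$ for odd $n$, not equality.

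Second, and more fundamentally, the error analysis cannot be done by ``bounding the resulting incomplete character sums by Weil'' in the Rudnick--Chinis style. In the hyperelliptic case that works because the generating function $\prod_Q(1+\chi_P(Q)u^{\deg Q})$ has no pole at $u=q^{-1}$, so the fixed-$P$ sum is genuinely small. Here the summand $\chi_P(f_1f_2)$ leaves $f_3$ unweighted, so the $f_3$-sum contributes a zeta factor and the generating series $S_\pm(u)=Z(u)L(u,\chi_P^\pm)^2H_{P,\pm}(u)$ acquires a pole at $u=q^{-1}$. Consequently $\sum_{(f_1,f_2,f_3)\in\F_g}\chi_P(f_1f_2)$ has a genuine $P$-dependent \emph{main term} of order $q^{g+3}$, namely the quantity $C(g;P)$ of Proposition \ref{fixedprimeprop}, built from $L(q^{-1},\chi_P^\pm)^2H_{P,\pm}(q^{-1})$ and $L(q^{-1},\chi_P^+)L(q^{-1},\chi_P^-)H_{P,0}(q^{-1})$. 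One must \emph{asymptotically evaluate} $\sum_{\deg P=n}C(g;P)$ and observe the ensuing cancellations (against each other when $g$ is even, and against the $\{g+3,0,0\},\{g+2,0,0\}$ exclusion term when $g$ is odd); a one-sided Weil bound on character sums cannot see this. This is exactly where the paper brings in the Euler-product truncation of David--Koukoulopoulos--Smith (Section \ref{framework} and Proposition \ref{frameapprop}): truncate at degree $M$, extract the diagonal $FD_1D_2=1$ term, and bound the off-diagonal by reciprocity and Weil. The restriction $n=o(g^{2/3})$ has nothing to do with ``trilinear coupling across three parameter ranges''; it comes from the truncation error $q^{n/2}M^3/n$ with the forced choice $M=n$, measured against $|\F_g|\asymp g^2q^{g+3}$. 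Without this moment-of-$L$-functions computation the argument cannot close.
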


Since our range of $n$ is sublinear in $g$, we can not prove a one-level density result. However, this result gives an obvious conjecture.

\begin{conj}
If $f$ is an even Schwartz function with Fourier transform $\hat{f}$ supported in $(-\alpha,\alpha)$, then as $g\to\infty$
$$\langle Z_f(\Theta_C)\rangle_{\Hh_{g,\biq}} = \int_{USp(2g)^3} Z_f(U) dU + o(1).$$
\end{conj}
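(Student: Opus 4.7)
The plan is to adapt Rudnick's explicit-formula approach to the biquadratic setting, using the decomposition of $C$ into its three hyperelliptic sub-covers. Since a biquadratic curve with model $Y_1^2 = F_1(X)$, $Y_2^2 = F_2(X)$ has Galois group $(\Z/2\Z)^2$ over $\Ff_q(X)$ (provided $F_1$, $F_2$, and $F_1F_2$ are all non-squares modulo squares), the zeta function factors as
\[
Z_C(u) = \frac{L(u,\chi_{F_1})\, L(u,\chi_{F_2})\, L(u,\chi_{F_1F_2})}{(1-u)(1-qu)},
\]
so $\Tr(\Theta_C^n) = \Tr(\Theta_{C_1}^n)+\Tr(\Theta_{C_2}^n)+\Tr(\Theta_{C_3}^n)$, where $C_1, C_2, C_3$ are the three hyperelliptic sub-covers. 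This algebraic identity is precisely what produces the $USp(2g)^3$ structure in the statement.

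Next I would apply the standard explicit formula for hyperelliptic $L$-functions to write $q^{n/2}\Tr(\Theta_{C_i}^n) = -\sum_{\deg f = n}\Lambda(f)\chi_{D_i}(f)$ up to small boundary corrections, and then swap the sum over $f$ with the average over $\Hh_{g,\biq}$. The family is parametrized by pairs $(F_1,F_2)$ of monic squarefree polynomials in prescribed degree ranges dictated by the biquadratic genus formula, modulo the $S_3$-action permuting the three subfields. Since $\chi_{F_1F_2}(f) = \chi_{F_1}(f)\chi_{F_2}(f)$, the inner average reduces to
\[
\bigl\langle \chi_{F_1}(f) + \chi_{F_2}(f) + \chi_{F_1}(f)\chi_{F_2}(f) \bigr\rangle_{(F_1,F_2)},
\]
which one evaluates via the sum $\sum_{F \text{ squarefree},\, \deg F = d}\chi_F(f)$ and its coprime variants, using quadratic reciprocity in $\Ff_q[X]$. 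Contributions from $f$ a perfect square yield the main term matching $-3\eta_n$ for $n \leq 2g$; for $n$ odd, the squareness obstruction forces the main term to vanish exactly, giving the clean equality in that case.

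The principal obstacle will be the error analysis for even $n$. The cross term $\chi_{F_1}(f)\chi_{F_2}(f)$ couples $F_1$ and $F_2$ non-trivially, and one must separately handle $f$ that are squareful but not perfect squares, as well as corrections when $F_1$ and $F_2$ share common factors (which alters the squarefree part of $F_1F_2$). The cutoff $n = o(g^{2/3})$ presumably arises from balancing the accumulated error in these joint character sums, weighted by $q^{-n/2}$, against the size of the family; pushing beyond would require sharper bounds. Executing these estimates with sufficient precision, likely via Poisson summation in $\Ff_q[X]$ in the spirit of Chinis's treatment of $\Hh_{g,2}$, will be the technical crux.
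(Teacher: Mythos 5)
The statement you are asked to prove is, in the paper, an open \emph{conjecture}; the paper does not prove it. Section \ref{density} only shows that the conjecture would follow \emph{conditionally} from an extension of Theorem \ref{mainthm}: if the trace estimate $\langle\Tr(\Theta_C^n)\rangle_{\Hh_{g,\biq}} = \int_{USp(2g)^3}\Tr(U^n)\,dU + o(1)$ held for all $n < 2\alpha g$, then expanding $Z_f$ via Poisson summation as $Z_F(U) = \hat f(0) + \tfrac{1}{g}\sum_{n\geq 1}\hat f\bigl(\tfrac{n}{2g}\bigr)\Tr(U^n)$ and interchanging the $n$-sum with the family average would give the result. Your proposal never states this conditional reduction; instead it re-derives the strategy behind Theorem \ref{mainthm} itself (decomposition into the three quadratic sub-covers $Y^2=f_1f_3$, $Y^2=f_2f_3$, $Y^2=f_1f_2$, then the point-count/explicit formula, then averaging $\chi_{f_1f_3}+\chi_{f_2f_3}+\chi_{f_1f_2}$ over $\F_g$), which is essentially what the paper does in Sections 3--7.

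The genuine gap is the range of $n$. Your own sketch concedes that the error analysis caps out at $n = o(g^{2/3})$, and you frame the extension to larger $n$ as merely ``the technical crux.'' But that gap is not technical: a one-level density statement for test functions with $\supp(\hat f)\subset(-\alpha,\alpha)$ requires controlling $\langle\Tr(\Theta_C^n)\rangle$ uniformly for $n$ up to $2\alpha g$, a range linear in $g$, while the bounds coming from \eqref{Riemhypbound} together with the prime sums in Proposition \ref{frameapprop} only give the main term for $n$ sublinear in $g$ --- the error term $M^3/g^2$ with the forced choice $M \approx n$ is what kills it. Until one has sharper cancellation in the double character sums (the analogue of the finer asymptotics of Rudnick and Chinis, which were developed precisely to reach $n$ close to $2g$ in the hyperelliptic case), the argument does not close, and the statement remains a conjecture. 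As written, your proposal describes the approach accurately but does not produce a proof, and it omits the only part the paper actually establishes, namely the Poisson-summation reduction in Section \ref{density} from $Z_f$ to the trace moments.
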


In Section \ref{density} we prove the conjecture under the assumption that we can increase the range for $n$ in Theorem \ref{mainthm} to $n<2\alpha g$.

\textbf{Acknowledgements} I would like to thank Ze\'ev Rudnick for many useful conversations throughout this project.

The research leading to these results has received funding from the European Research Council under the European Union's Seventh Framework Programme (FP7/2007-2013) / ERC grant agreement n$^{\text{o}}$ 320755.

\section{Background on Function Fields}

In this section we give background on relevant results in function fields. Unless otherwise stated, the reference for this section will be \cite{rose}.

\subsection{Zeta and $L$-Function}

If we let $C=\mathbb{A}^1_{\Ff_q}$, the affine line, then \eqref{zetafunc} tells us that for $|u|<q^{-1}$
\begin{align}\label{zetafunc2}
Z(u) := Z_{\mathbb{A}^1_{\Ff_q}}(u) = \exp\left(\sum_{n=1}^{\infty} \frac{(qu)^n}{n}\right) = \frac{1}{1-qu}.
\end{align}
It is easy to see that for $|u|<q^{-1}$, $Z(u)$ has the following equivalent forms
\begin{align}\label{zetafunc3}
Z(u) = \sum_{F} u^{\deg(F)} = \prod_P \left( 1-u^{\deg(P)} \right)^{-1}
\end{align}
where the sum is over all monic polynomials in $\Ff_q[X]$ and the product is over all monic prime polynomials in $\Ff_q[X]$. We will use the change of variable $u=q^{-s}$ and define
\begin{align}\label{zetafuncs}
\zeta_q(s) := Z(u).
\end{align}

Let $\chi_D$ be a non-trivial quadratic Dirichlet character modulo $D$. Define the $L$-function related to this character as
\begin{align}\label{Lfunc}
L(u,\chi_D) = \sum_{F} \chi_D(F)u^{\deg(F)} = \prod_P \left(1-\chi_D(P)u^{\deg(P)}\right)^{-1}
\end{align}
which converges for $|u|<q^{-1}$. However,
$$\sum_{\deg(F)=d} \chi_D(F) = 0$$
for $d\geq \deg(D)$. Hence,
\begin{align}\label{Lfunc2}
L(u,\chi_D) = \sum_{d=0}^{\deg(D)-1} \left(\sum_{\deg(F)=d}\chi_D(F)\right) u^d
\end{align}
is a polynomial of degree at most $\deg(D)-1$.

For every Dirichlet character $\chi_D$, define $\chi_D^-$ as
\begin{align}\label{chipm}
\chi_D^{-}(F) = (-1)^{\deg(F)}\chi_D(F).
\end{align}
Define $L(u,\chi_D^-)$ in the same way as above. Then we have that
\begin{align}\label{Lfuncpm}
L(u,\chi_D^-) = L(- u, \chi_D)
\end{align}
and all relevant results that apply to $\chi_D$ also apply to $\chi^-_D$.

For consistency, we will write $\chi_D = \chi_D^+$ and often write $\chi_D^{\pm}$ to mean either $\chi_D^+$ or $\chi_D^-$.

\subsection{Prime and Square-free Polynomials}

The number of monic polynomials of degree $d$ can easily shown to be $q^d$. Let
$$\pi_q(n) = |\{P\in\Ff_q[X] : \deg(P)=n, P \mbox{ is a monic prime}\}|.$$
Then the Prime Polynomial Theorem says that
\begin{align}\label{PPNT}
\pi_q(n) = \frac{q^n}{n}+O\left(\frac{q^{n/2}}{n}\right).
\end{align}

Define the mobius function:
$$\mu(F) = \begin{cases} (-1)^r & F=P_1\cdots P_r, P_i \mbox{ are distinct primes} \\ 0 & \mbox{otherwise}\end{cases}.$$
Then $\mu^2$ is the indicator function for a polynomial being square-free. A simple sieving argument shows that
\begin{align}\label{squarefree}
\sum_{\deg(F)=d} \mu^2(F) = \begin{cases}\frac{q^d}{\zeta_q(2)} & d\geq 2 \\ q^d & d=0,1\end{cases}.
\end{align}

\subsection{Bounds of Character Sums}

Let $\chi_D$ be a non-trivial quadratic Dirichlet character modulo $D$. Let
$$\lambda = \begin{cases} 1 & \deg(D) \mbox{ even} \\ 0 & \deg(D) \mbox{ odd} \end{cases}.$$
Define
\begin{align}\label{L*}
L^*(u,\chi_D) = (1-u)^{\lambda} L(u,\chi_D)
\end{align}
and
$$2\delta= \deg(D)-1-\lambda.$$
Then $L^*(u,\chi_D)$ satisfies the function equation
$$L^*(u,\chi_D) = (qu^2)^{\delta}L^*\left(\frac{1}{qu},\chi_D\right)$$
and all the zeros of $L^*(u,\chi_D)$ lie on the circle $|u|=q^{-1/2}$ (this is the Riemann Hypothesis for $L$-functions \cite{W}). Therefore, we can find a matrix $\Theta_D\in USp(2\delta)$ such that
\begin{align}\label{Lfuncdet}
L^*(u,\chi_D) = \det(I-u\sqrt{q}\Theta_D).
\end{align}

Substituting \eqref{Lfunc} into \eqref{L*}, equating it with \eqref{Lfuncdet} and taking logarithmic derivatives we obtain the formula
$$-\Tr(\Theta_{D}^n) = \frac{\lambda}{q^{n/2}} + \frac{1}{q^{n/2}}\sum_{\deg(F)=n} \Lambda(F) \chi_D(F)$$
where $\Lambda(F)$ is the von Mangoldt function.

Therefore, since $\Theta_D$ is unitary we get
$$\left|\sum_{\deg(P)=n} \chi_D(P)\right| \ll \frac{\deg(D)}{n}q^{n/2}$$
where the sum is over prime polynomials.

This can obviously be extended to $\chi_D^{\pm}$:
\begin{align}\label{Riemhypbound}
\left|\sum_{\deg(P)=n} \chi_D^{\pm}(P)\right| \ll \frac{\deg(D)}{n}q^{n/2}.
\end{align}

\subsection{Bounds of Double Character Sums}

Rudnick \cite{R} for $d$ odd and Chinis \cite{C} for $d$ even showed that as long as $n<2d-C\log_q(g)$ for some $C$, then
\begin{align}\label{doublecharsum}
\frac{n}{q^{d+n/2}}\sum_{\deg(D)=d}\sum_{\substack{\deg(P)=n \\ P \mbox{ prime}}} \mu^2(D) \chi_D(P) \ll 1
\end{align}
as $d\to\infty$.

Finding much better asymptotic results for the above is the main content of \cite{C} and \cite{R}. However, for our purposes what we have stated above suffices.

\section{Trace Formula}

As mentioned in the introduction, the curves in $\Hh_{g,\biq}$ have affine model of the form
\begin{align}\label{affmodel}
Y_1^2 = F_1(X) \quad\quad\quad Y_2^2=F_2(X)
\end{align}
where $F_1$ and $F_2$ are square-free polynomials. Let $f_3 = \gcd(F_1,F_2)$. Then we can write $F_1 = f_1f_3$ and $F_2 = f_2f_3$. Since $F_1$ and $F_2$ were square-free, we get that $f_1,f_2,f_3$ are square-free and pairwise coprime.

Equation \eqref{numpts1} shows that knowing $\Tr(\Theta_C^n)$ is equivalent to knowing $N_n(C)$, the number of $\Ff_{q^n}$-points on $C$. A formula for $N_1(C)$ is developed in \cite{LMM} and \cite{M2}. The same argument works for $N_n(C)$. That is,
$$N_n(C) = \sum_{x\in\Pp^1(\Ff_{q^n})} \left(1+\chi_2(f_1f_3(x)) + \chi_2(f_2f_3(x)) + \chi_2(f_1f_2(x))\right)$$
where $\chi_2:\Ff_{q^n} \to \mathbb{C}$ is a non-trivial quadratic character and, if we denote the point at infinity as $\infty$, we define
$$F(\infty) = \begin{cases} \mbox{leading coefficient of $F$} & \deg(F)\equiv 0 \mod{2} \\ 0 & \mbox{otherwise} \end{cases}.$$
Hence, we have
\begin{align}\label{traceform1}
\Tr(\Theta_C^n) = -q^{-n/2}\sum_{x\in\Pp^1(\Ff_{q^n})} \left(\chi_2(f_1f_3(x)) + \chi_2(f_2f_3(x)) + \chi_2(f_1f_2(x))\right).
\end{align}

It is shown in \cite{LMM} and \cite{M2} that if we define
\begin{align}\label{genfunL}
L(d_1,d_2,d_3) = d_1+d_2+d_3 + \begin{cases} 0 & d_1+d_3\equiv d_2+d_3\equiv 0\mod{2} \\ 1 & \mbox{otherwise}\end{cases}
\end{align}
then the genus of the curves in $\Hh_{g,\biq}$ satisfies the equation
\begin{align}\label{genform}
g+3 =L(\deg(f_1),\deg(f_2),\deg(f_3)).
\end{align}

We would like to say that all biquadratic curves come from the triples of polynomials such that $L(\deg(f_1),\deg(f_2),\deg(f_3))=g+3$. However, this is not the case. If $g$ is odd then we have $L(g+3,0,0)=L(g+2,0,0)=g+3$. So if we have a triple of polynomials such that $\{\deg(f_1),\deg(f_2),\deg(f_3)\}=\{g+3,0,0\}$ or $\{g+2,0,0\}$, then two of the three polynomials would be constants and this would correspond to a hyperelliptic curve and not a biquadratic curve. It is easy to verify that these are the only instances in which $g+3 =L(\deg(f_1),\deg(f_2),\deg(f_3))$ is satisfied but we do not obtain a biquadratic curve.

\begin{rem}
To save space, for a triple of polynomial $f_1,f_2,f_3$ we will write
\begin{align}\label{genfunL2}
L(f_1,f_2,f_3) := L(\deg(f_1),\deg(f_2),\deg(f_3)).
\end{align}
\end{rem}

With this in mind, define the set of triples of polynomials
\begin{align}\label{Fsethat}
\begin{split}
\widehat{\F}_g = \{(f_1,f_2,f_3)  : & f_1,f_2,f_3 \mbox{ are square-free and pairwise coprime, } f_3 \mbox{ is monic, }\\
&   L(f_1,f_2,f_3)=g+3 \mbox{ and } \{\deg(f_1),\deg(f_2),\deg(f_3)\} \not= \{g+3,0,0\},\{g+2,0,0\} \}.
\end{split}
\end{align}
Then $\widehat{\F_g}$ parameterizes $\Hh_{g,\biq}$.

\begin{rem}\label{traceformrem}
As noted above the condition $L(f_1,f_2,f_3)=g+3$ and  $\{\deg(f_1),\deg(f_2),\deg(f_3)\} = \{g+3,0,0\}$ or $\{g+2,0,0\}$ can only be simultaneously obtained when $g$ is odd. Hence, if $g$ is even, we may omit the condition  $\{\deg(f_1),\deg(f_2),\deg(f_3)\} \not= \{g+3,0,0\}$ or $\{g+2,0,0\}$.
\end{rem}

\begin{rem}
If $f_3$ is allowed to be non-monic in $\widehat{\F}_g$ then each tuple would correspond to $q-1$ different pairs of $F_1,F_2$ in \eqref{affmodel}, which we want to avoid.
\end{rem}

Therefore, we can write the expected trace as
\begin{align}
\langle\Tr(\Theta_C^n)\rangle_{\Hh_{g,\biq}} = \frac{-q^{-n/2}}{|\widehat{\F}_g|} \sum_{(f_1,f_2,f_3)\in\widehat{\F}_g} \sum_{x\in\Pp^1(\Ff_{q^n})} \left(\chi_2(f_1f_3(x)) + \chi_2(f_2f_3(x)) + \chi_2(f_1f_2(x))\right).
\end{align}

\section{Main Term}

In this section, we calculate the main term of $\Tr(\Theta_C^n)$ as well as rewrite the formula in terms of a sum over monic poynomials. With this in mind define the set
\begin{align}\label{Fset}
\F_g = \{(f_1,f_2,f_3)\in\widehat{\F}_g : f_1,f_2,f_3 \mbox{ are monic}\}.
\end{align}

\subsection{Odd n}\label{Oddn} If $n$ is odd then $\chi_2$ is a non-trivial quadratic character on $\Ff_q$. Therefore,
$$\sum_{\alpha\in\Ff^*_q} \chi_2(\alpha) =0.$$

For any $x\in\Ff_{q^n}$,
\begin{align*}
& \sum_{(f_1,f_2,f_3)\in\widehat{\F}_g} \left(\chi_2(f_1f_3(x)) + \chi_2(f_2f_3(x)) + \chi_2(f_1f_2(x))\right)\\
& =  \sum_{(f_1,f_2,f_3)\in\F_g}\sum_{\alpha,\beta\in\Ff^*_q}  \left(\chi_2(\alpha f_1f_3(x)) + \chi_2(\beta f_2f_3(x)) + \chi_2(\alpha\beta f_1f_2(x))\right) \\
& = \sum_{(f_1,f_2,f_3)\in\F_g}  \left(\chi_2(f_1f_3(x))\sum_{\alpha,\beta\in\Ff^*_q}\chi_2(\alpha) + \chi_2(f_2f_3(x))\sum_{\alpha,\beta\in\Ff^*_q}\chi_2(\beta) + \chi_2(f_1f_2(x))\sum_{\alpha,\beta\in\Ff^*_q}\chi_2(\alpha\beta)\right) \\
& = 0.
\end{align*}
Hence, in this case we get
$$\langle \Tr(\Theta_C^n) \rangle_{\Hh_{g,\biq}} = 0.$$

Now, we notice that for odd $n$
$$\int_{USp(2g)} \Tr(U^n) du = 0.$$
Therefore, this proves the first statement of Theorem \ref{mainthm}.

From now on we assume $n$ is even.

\subsection{Even n}

If $n$ is even and $\alpha\in\Ff^*_{q^{n/2}}$ then $[\Ff_{q^{n/2}}(\sqrt{\alpha}):\Ff_{q^{n/2}}] = 1$ or $2$. That is,
$$\Ff_{q^{n/2}}(\sqrt{\alpha}) = \Ff_{q^{n/2}}\subset \Ff_{q^n} \mbox{ or } \Ff_{q^{n/2}}(\sqrt{\alpha})=\Ff_{q^n}.$$
Hence, $\sqrt{\alpha} \in \Ff_{q^n}$ and so $\chi_2(\alpha)=1$. Therefore, for any $x\in\Pp^1(\Ff_{q^n})$
\begin{align*}
& \sum_{(f_1,f_2,f_3)\in\widehat{\F}_g} \left(\chi_2(f_1f_3(x)) + \chi_2(f_2f_3(x)) + \chi_2(f_1f_2(x))\right)\\
& = (q-1)^2\sum_{(f_1,f_2,f_3)\in\F_g}\left(\chi_2(f_1f_3(x)) + \chi_2(f_2f_3(x)) + \chi_2(f_1f_2(x))\right) \\
& = 3(q-1)^2\sum_{(f_1,f_2,f_3)\in\F_g}\chi_2(f_1f_2(x))
\end{align*}
with the last equality comes from the fact that the summand over $\F_g$ is symmetric in the arguments of $f_1,f_2,f_3$.

Moreover, if $F$ is any polynomial with coefficients in $\Ff_q$ and $x\in\Ff_{q^{n/2}}$ then $F(x)\in\Ff_{q^{n/2}}$ and $\chi_2(F(x))=1$ if $x$ is not a root of $F$ and $0$ otherwise. Therefore,
$$ \sum_{x\in\Ff_{q^{n/2}}}\sum_{(f_1,f_2,f_3)\in\F_g}\chi_2(f_1f_2(x))  = \sum_{x\in\Ff_{q^{n/2}}}\sum_{\substack{(f_1,f_2,f_3)\in\F_g \\ f_1f_2(x)\not=0}} 1 $$
$$= q^{n/2}|\F_g| - \sum_{x\in\Ff_{q^{n/2}}}\sum_{\substack{(f_1,f_2,f_3)\in\F_g \\ f_1f_2(x)=0}} 1 $$

Now, we notice that $|\widehat{\F}_g| = (q-1)^2|\F_g|$ and therefore, we get that
\begin{align}\label{traceform2}
\begin{split}
\langle\Tr(\Theta_C^n)\rangle_{\Hh_{g,\biq}} = & -3 + \frac{3q^{-n/2}}{|\F_g|}\sum_{x\in\Ff_{q^{n/2}}} \sum_{\substack{(f_1,f_2,f_3)\in\F_g \\ f_1f_2(x)=0}} 1 \\  & - \frac{3q^{-n/2}}{|\F_g|} \sum_{\substack{x\in\Ff_{q^n} \\ x\not\in\Ff_{q^{n/2}} }} \sum_{(f_1,f_2,f_3)\in\F_g} \chi_2(f_1f_2(x)).
\end{split}
\end{align}
So, we see we have a constant term of $-3$ appearing. This is the main term and the rest of this paper is devoted to determining when the other terms tend to $0$.

\section{Error Terms: Easy Cases}

In this section we show that as long as $n$ is sufficiently large then many terms in \eqref{traceform2} tend to zero as the genus tends to infinity.

\subsection{Roots}

First, let us deal with the term coming from the roots of $f_1f_2$:
$$\frac{3q^{-n/2}}{|\F_g|}\sum_{x\in\Ff_{q^{n/2}}} \sum_{\substack{(f_1,f_2,f_3)\in\F_g \\ f_1f_2(x)=0}} 1 = \frac{3q^{-n/2}}{|\F_g|} \sum_{(f_1,f_2,f_3)\in\F_g } \sum_{\substack{x\in\Ff_{q^{n/2}} \\ f_1f_2(x)=0}} 1$$

Now, given an $(f_1,f_2,f_3)\in\F_g$, how many roots can $f_1f_2$ have in $\Ff_{q^{n/2}}$? Since $\deg(f_1f_2)\leq g+3$, there are at most $g+3$ roots. Hence the above will be less than
$$\frac{3q^{-n/2}}{|\F_g|} \sum_{(f_1,f_2,f_3)\in\F_g }(g+3) = 3\frac{g+3}{q^{n/2}}.$$
So then as long as $n\geq (2+\epsilon)\log_q(g)$, we get
\begin{align}\label{roots}
\frac{3q^{-n/2}}{|\F_g|}\sum_{x\in\Ff_{q^{n/2}}} \sum_{\substack{(f_1,f_2,f_3)\in\F_g \\ f_1f_2(x)=0}} 1 = o\left(1\right)
\end{align}
as $g\to\infty$.

\subsection{Non-generating Values}

Now let us consider the contributions in the second error term of \eqref{traceform2} that come from values that do not generate $\Ff_{q^n}$ over $\Ff_q$. That is the sum
\begin{align}\label{nongenval}
\frac{3q^{-n/2}}{|\F_g|} \sum_{\substack{x\in\Ff_{q^n} \\ x\not\in\Ff_{q^{n/2}} \\ \deg(P_x)\not=n }} \sum_{(f_1,f_2,f_3)\in\F_g} \chi_2(f_1f_2(x))
\end{align}
where we denote $P_x$ as the minimum polynomial of $x$ over $\Ff_q$. Now, we will use the very trivial bound $\chi_2(f_1f_2(x)) \leq 1$ to get that \eqref{nongenval} is less than
\begin{align}\label{nongenval2}
3q^{-n/2} \sum_{\substack{x\in\Ff_{q^n} \\ x\not\in\Ff_{q^{n/2}} \\ \deg(P_x)\not=n }} 1.
\end{align}

All the $x$'s in the sum have the property that $\deg(P_x)|n$ (since $x\in\Ff_{q^n}$), $\deg(P_x)\nmid \frac{n}{2}$ (since $x\not\in\Ff_{q^{n/2}}$) and $\deg(P_x)\not=n$. Hence all the $x$'s are contained in $\Ff_{q^D}$ where
$$D = \lcm\left(d : d|n, d\nmid \frac{n}{2}, d\not=n\right)\leq n/3.$$

Therefore, there are $q^D$ terms appearing in the sum and we get \eqref{nongenval2} will be less than
$$3q^{-n/2+D} \leq 3q^{-n/6}.$$
As long as $n$ tends to infinity with $g$, we have
\begin{align}\label{nongenval3}
\frac{3q^{-n/2}}{|\F_g|} \sum_{\substack{x\in\Ff_{q^n} \\ x\not\in\Ff_{q^{n/2}} \\ \deg(P_x)\not=n }} \sum_{(f_1,f_2,f_3)\in\F_g} \chi_2(f_1f_2(x)) = o\left(1\right)
\end{align}
as $g\to\infty$.

\section{Error Term: Fixed Prime}

It remains to determine how quickly
\begin{align}\label{errterm1}
\frac{3q^{-n/2}}{|\F_g|} \sum_{\substack{x\in\Ff_{q^n} \\  \deg(P_x)=n }} \sum_{(f_1,f_2,f_3)\in\F_g} \chi_2(f_1f_2(x))
\end{align}
grows as $g$ tends to infinity.

\subsection{Reformulation}

We first notice that for all $x$ such that $\deg(P_x)=n$, there is an isomorphism
\begin{center}
\begin{tabular}{c c c}
$\Ff_q[X]/(P_x)$ & $\to$ & $\Ff_{q^n}$\\
$\overline{F}$ & $\mapsto$ & $F(x)$.
\end{tabular}
\end{center}
Thus, $F(x)$ is a square in $\Ff_{q^n}$ if and only if $F$ is a square modulo $P_x$. Hence,
$$\chi_2(F(x)) = \chi_{P_x}(F) := \left(\frac{F}{P_x}\right)$$
where $\left(\frac{\cdot}{\cdot}\right)$ is the Legendre symbol.

Finally, there is an obvious $n$-to-$1$ correspondence between $x\in \Ff_{q^n}$ such that $\deg(P_x)=n$ and prime polynomials in $\Ff_q[X]$ of degree $n$. Thus, we can rewrite $\eqref{errterm1}$ as
\begin{align}\label{errterm2}
\frac{3nq^{-n/2}}{|\F_g|} \sum_{\deg(P)=n} \sum_{(f_1,f_2,f_3)\in\F_g} \chi_P(f_1f_2).
\end{align}

\subsection{Fixed Prime}\label{fixedprime}

In this section, we will fix a prime $P$ and determine the sum
$$\sum_{(f_1,f_2,f_3)\in\F_g} \chi_P(f_1f_2)$$

Since the definition of $\F_g$ depends on $L(d_1,d_2,d_3)$ which itself depends on the congruence of $d_1,d_2,d_3$ modulo $2$, we will first determine the sum
$$N_{k_1,k_2}(d;P) := \sum_{\substack{\deg(f_1f_2f_3)=d \\ \deg(f_1f_3)\equiv k_1\mod{2} \\ \deg(f_2f_3) \equiv k_2 \mod{2}}} \mu^2(f_1f_2f_3)\chi_P(f_1f_2)$$
for a given $k_1,k_2\in\{0,1\}$.

For any prime polynomial $P$, recall the definition $\chi_P^{\pm}$ and $L(u,\chi_P^{\pm})$ from \eqref{Lfunc} and \eqref{chipm}. Then we have the following result.

\begin{lem} \label{fixedprimelem}
For a fixed prime polynomial $P$ and $k_1,k_2 \in \{0,1\}$, we have
$$N_{k_1,k_2}(d;P) = \frac{C_{k_1,k_2}(d;P)}{4} q^d + O\left((q^{\frac{1}{2}+\epsilon)d}\right)$$
where $C_{k_1,k_2}(d;P)$ is defined as
\begin{align}\label{C_k(P)}
\begin{split}
C_{k_1,k_2}(d;P) & = L(q^{-1},\chi_P^+)^2H_{P,+}(q^{-1}) + (-1)^{k_1+k_2}L(q^{-1},\chi_P^-)^2H_{P,-}(q^{-1}) \\
& + (-1)^d((-1)^{k_1}+(-1)^{k_2})L(q^{-1},\chi_P^+)L(q^{-1},\chi_P^-)H_{P,0}(q^{-1})
\end{split}
\end{align}
and $H_{P,+}(u),H_{P,-}(u)$ and $H_{P,0}(u)$ are functions that converge for $|u|<q^{-1/2}$ and are defined in \eqref{Hpmdef} and \eqref{H0def}.
\end{lem}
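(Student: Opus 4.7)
The plan is to convert $N_{k_1,k_2}(d;P)$ into a coefficient extraction from a three–variable generating function, isolate the singular factors as a product of $L$-functions and a $\zeta$-factor, and read off the main term by a residue calculation, with the error coming from a uniformly convergent tail.

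First, encode the parity constraints using $\mathbf{1}[\deg f \equiv k \pmod 2] = \tfrac{1}{2}(1+(-1)^{k+\deg f})$. Expanding the product of the two indicators and weighting $u_i^{\deg f_i}$ gives
\begin{equation*}
\sum_{d \ge 0} N_{k_1,k_2}(d;P)u^d = \tfrac14\!\left[F(u,u,u)+(-1)^{k_1}F(-u,u,-u)+(-1)^{k_2}F(u,-u,-u)+(-1)^{k_1+k_2}F(-u,-u,u)\right],
\end{equation*}
where
\begin{equation*}
F(u_1,u_2,u_3)=\sum_{f_1,f_2,f_3}\mu^2(f_1f_2f_3)\chi_P(f_1f_2)u_1^{\deg f_1}u_2^{\deg f_2}u_3^{\deg f_3}.
\end{equation*}
Since $\mu^2(f_1f_2f_3)$ forces the three polynomials to be pairwise coprime and square-free, each prime $Q$ contributes independently, yielding the Euler product
\begin{equation*}
F(u_1,u_2,u_3) = \prod_{Q}\Bigl(1+\chi_P(Q)u_1^{\deg Q}+\chi_P(Q)u_2^{\deg Q}+u_3^{\deg Q}\Bigr).
\end{equation*}

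Next, factor out the singular part. I will show
\begin{equation*}
F(u_1,u_2,u_3)=\frac{L(u_1,\chi_P)L(u_2,\chi_P)}{1-qu_3}H(u_1,u_2,u_3),
\end{equation*}
with $H$ given by an Euler product that converges absolutely on $|u_i|<q^{-1/2}$. Indeed, the local factor of $H$ at $Q\ne P$ is $(1+\chi_P(Q)u_1^{\deg Q}+\chi_P(Q)u_2^{\deg Q}+u_3^{\deg Q})(1-\chi_P(Q)u_1^{\deg Q})(1-\chi_P(Q)u_2^{\deg Q})(1-u_3^{\deg Q})$, and a direct expansion shows that the deviation from $1$ involves only monomials $u_i^{2\deg Q}$ and $u_i^{\deg Q}u_j^{\deg Q}$ (and higher), so its absolute value is $O(q^{-2\sigma\deg Q})$ on $|u_i|\le q^{-\sigma}$; the series $\sum_Q q^{-2\sigma\deg Q}\asymp\sum_n q^{(1-2\sigma)n}/n$ then converges precisely for $\sigma>1/2$. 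The factor at $Q=P$ is trivially $(1+u_3^{\deg P})(1-u_3^{\deg P})$, which presents no issue. Set
\begin{equation*}
H_{P,+}(u):=H(u,u,u),\qquad H_{P,-}(u):=H(-u,-u,u),\qquad H_{P,0}(u):=H(u,-u,u),
\end{equation*}
all analytic on $|u|<q^{-1/2}$; the symmetry of the local Euler factor in $u_1,u_2$ shows that $H(u,-u,u)=H(-u,u,u)$, which is what allows the two cross terms to collapse into the single $H_{P,0}$.

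Finally, extract the main term. Each of the four substitutions yields a rational expression of the form $G(u)/(1\mp qu)$ in which $G$ is analytic on $|u|<q^{-1/2}$ (recall $L(u,\chi_P^{\pm})$ are polynomials and $L(-u,\chi_P^{\pm})=L(u,\chi_P^{\mp})$). Partial fractions give
\begin{equation*}
\frac{G(u)}{1\mp qu}=\frac{G(\pm q^{-1})}{1\mp qu}+\tilde G(u),
\end{equation*}
with $\tilde G$ still analytic on $|u|<q^{-1/2}$. The first piece contributes $[u^d]=(\pm q)^dG(\pm q^{-1})$ while Cauchy's estimate, applied at any radius $r<q^{-1/2}$, bounds $[u^d]\tilde G$ by $O_\epsilon(q^{(1/2+\epsilon)d})$. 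Collecting the four contributions: the $(1,1,1)$ term produces $q^dL(q^{-1},\chi_P^+)^2H_{P,+}(q^{-1})$; the $(-1,-1,1)$ term produces $q^d(-1)^{k_1+k_2}L(q^{-1},\chi_P^-)^2H_{P,-}(q^{-1})$; and the two ``cross'' substitutions $(-1,1,-1)$ and $(1,-1,-1)$ both have their pole at $u=-q^{-1}$, contributing the factor $(-q)^d=(-1)^dq^d$, and combine to give $(-1)^d((-1)^{k_1}+(-1)^{k_2})L(q^{-1},\chi_P^+)L(q^{-1},\chi_P^-)H_{P,0}(q^{-1})\,q^d$. Dividing by $4$ yields the asserted formula.

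The main technical obstacle is the Euler product bookkeeping: one must verify that after dividing $F$ by the two $L$-functions and $\zeta_q(u_3)$, each local factor really is $1+O(q^{-2\sigma\deg Q})$ at $|u_i|=q^{-\sigma}$, so that the critical line for $H$ sits at $|u|=q^{-1/2}$ rather than $|u|=q^{-1}$; this is what produces a genuine square-root saving $O(q^{(1/2+\epsilon)d})$ rather than the trivial $O(q^d)$.
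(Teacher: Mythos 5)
Your proposal is correct and follows essentially the same route as the paper: the paper encodes the two parity constraints via the same $\tfrac14(1+(-1)^{k+\deg})$ indicator trick, obtains three one-variable generating series $S_+,S_-,S_0$ (which are precisely your $F(u,u,u)$, $F(-u,-u,u)$, and $F(\mp u,\pm u,-u)$), factors each as a zeta-factor times two $L$-functions times an Euler product $H_{P,*}$ convergent on $|u|<q^{-1/2}$, and then extracts the coefficient of $u^d$ by taking residues at $u=\pm q^{-1}$ with the remainder bounded on the circle $|u|=q^{-(1/2+\epsilon)}$. Packaging everything into a single three-variable Euler product $F(u_1,u_2,u_3)$ and specializing the variables is a mild cosmetic improvement over the paper's presentation, but the decomposition, the $1+O(q^{-2\sigma\deg Q})$ local-factor estimate, and the residue/Cauchy extraction are the same; your identification $H_{P,0}(q^{-1})=H(q^{-1},-q^{-1},q^{-1})$ is consistent with the residue at $u=-q^{-1}$ of $F(u,-u,-u)/(u^{d+1})$ via the $u_1\leftrightarrow u_2$ symmetry of $H$.
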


\begin{proof}

Consider the generating series
\begin{align*}
\G_{k_1,k_2}(u) & := \sum_{\substack{f_1,f_2,f_3 \\ \deg(f_1f_3)\equiv k_1\mod{2} \\ \deg(f_2f_3) \equiv k_2 \mod{2}}} \mu^2(f_1f_2f_3)\chi_P(f_1f_2)u^{\deg(f_1f_2f_3)} \\
& = \sum_{f_1,f_2,f_3}  \mu^2(f_1f_2f_3)\chi_P(f_1f_2)\frac{1}{4}\left(1 + (-1)^{k_1+\deg(f_1f_3)}\right)\left(1+(-1)^{k_2+\deg(f_2f_3)}\right)u^{\deg(f_1f_2f_3)} \\
& = \frac{1}{4}\left(S_+(u) + \left((-1)^{k_1}+(-1)^{k_2}\right)S_0(u) + (-1)^{k_1+k_2}S_-(u)\right)
\end{align*}
where
\begin{align*}
S_+(u) &:= \sum_{f_1,f_2,f_3} \mu^2(f_1f_2f_3)\chi_P(f_1f_2)u^{\deg(f_1f_2f_3)} = \prod_Q\left(1 + u^{\deg(Q)}+2\chi^+_P(Q)u^{\deg(Q)}\right)\\
S_-(u) & := \sum_{f_1,f_2,f_3} \mu^2(f_1f_2f_3)\chi_P(f_1f_2)(-u)^{\deg(f_1f_2)}u^{\deg(f_3)} = \prod_Q\left(1 + u^{\deg(Q)}+2\chi^-_P(Q)u^{\deg(Q)}\right)\\
S_0(u) & := \sum_{f_1,f_2,f_3} \mu^2(f_1f_2f_3)\chi_P(f_1f_2)(-u)^{\deg(f_1f_3)}u^{\deg(f_2)}\\
& = \prod_Q\left(1 + ((-1)^{\deg(Q)} + \chi_P^+(Q)+\chi_P^-(Q))u^{\deg(Q)}\right)
\end{align*}
and each of the products run over all monic prime polynomials in $\Ff_q[X]$.

Now,
\begin{align}\label{Hpmdef}
\begin{split}
S_{\pm}(u) & = \prod_Q\left(1 + u^{\deg(Q)}+2\chi^{\pm}_P(Q)u^{\deg(Q)}\right)\\
& = \prod_Q \left(1-u^{\deg(Q)}\right)^{-1} \prod_{Q}\left(1-\chi^{\pm}_P(Q) u^{\deg(Q)}\right)^{-2} H_{P,\pm}(u)\\
& = Z(u)L(u,\chi^{\pm}_P)^2 H_{P,\pm}(u)
\end{split}
\end{align}
where $Z(u)$ is the zeta function and $H_{P,\pm}(u)$ converges for $|u|<q^{-1/2}$. Further,
\begin{align}\label{H0def}
\begin{split}
S_0(u) & = \prod_Q \left(1+((-1)^{\deg(Q)}+\chi_P^+(Q)+\chi_P^-(Q))u^{\deg(Q)}\right)\\
& = \prod_Q \left(1-(-u)^{\deg(Q)}\right)^{-1}\prod_{Q}\left(1-\chi_P^+(Q)u^{\deg(Q)}\right)^{-1}\prod_{Q}\left(1-\chi_P^-(Q)u^{\deg(Q)}\right)^{-1} H_{P,0}(u)\\
& = Z(-u) L(u,\chi_P^+) L(u,\chi_P^-) H_{P,0}(u)
\end{split}
\end{align}
where $H_{P,0}(u)$ converges for $|u|<q^{-1/2}$.

Therefore, we see that $\G_{k_1,k_2}(u)$ can be analytically continued to the region $|u|<q^{-1/2}$ with simple poles at $u=q^{-1},-q^{-1}$.

If we let $C_{\epsilon} = \{u : |u|=q^{-(1/2+\epsilon)}\}$ for some $\epsilon>0$, then by Cauchy's residue theorem
$$ \Res_{u=0}\left(\frac{\G_{k_1,k_2}(u)}{u^{d+1}}\right) + \Res_{u=q^{-1}}\left(\frac{\G_{k_1,k_2}(u)}{u^{d+1}}\right)+ \Res_{u=-q^{-1}}\left(\frac{\G_{k_1,k_2}(u)}{u^{d+1}}\right)$$
$$ = \oint_{C_{\epsilon}} \frac{\G_{k_1,k_2}(u)}{u^{d+1}} du \ll q^{(\frac{1}{2}+\epsilon)D}$$
By definition of $\G_{k_1,k_2}(u)$ we have
$$ \Res_{u=0}\left(\frac{\G_{k_1,k_2}(u)}{u^{d+1}}\right) = N_{k_1,k_2}(d;P).$$
Finally,
\begin{align*}
\Res_{u=q^{-1}}\left(\frac{\G_{k_1,k_2}(u)}{u^{d+1}}\right) & = \frac{1}{4}\Res_{u=q^{-1}}\left(\frac{S_+(u)}{u^{d+1}} + \frac{(-1)^{k_1+k_2}S_-(u)}{u^{d+1}} \right)\\
& = \frac{-q^d}{4} \bigg(L(q^{-1},\chi_P^+)^2H_{P,+}(q^{-1}) + (-1)^{k_1+k_2}L(q^{-1},\chi_P^-)^2H_{P,-}(q^{-1})\bigg)
\end{align*}
and
\begin{align*}
\Res_{u=-q^{-1}}\left(\frac{\G_{k_1,k_2}(u)}{u^{d+1}}\right) & = \frac{(-1)^{k_1}+(-1)^{k_2}}{4}\Res_{u=-q^{-1}}\left(\frac{S_0(u)}{u^{d+1}}\right)\\
&  = \frac{- q^d(-1)^d((-1)^{k_1}+(-1)^{k_2})}{4} L(-q^{-1},\chi^+_P)L(-q^{-1},\chi^-_P)H_{P,0}(-q^{-1}).
\end{align*}
We use the observation that $L(u,\chi_P^-)=L(-u,\chi_P^+)$ and consqeuntly $H_{P,0}(-u)=H_{P,0}(u)$ to write it in the form in the statement.

\end{proof}

\begin{prop}\label{fixedprimeprop}

For a fixed prime polynomial $P$, we have
$$\sum_{(f_1,f_2,f_3)\in\F_g} \chi_P(f_1f_2) = \frac{C(g;P)}{4} q^{g+3} - (1-\eta_g)\left(2\sum_{\deg(f)=g+2,g+3} \mu^2(f)\chi_P(f) +\frac{q+1}{q}\frac{q^{g+3}}{\zeta_q(2)}\right) + O\left(q^{(\frac{1}{2}+\epsilon)g}\right)$$
where
\begin{align}\label{C(P)}
\begin{split}
C(g;P) & =  \frac{q+3}{q}L(q^{-1},\chi_P^+)^2H_{P,+}(q^{-1}) + \frac{q-1}{q}L(q^{-1},\chi_P^-)^2H_{P,-}(q^{-1}) \\
& - 2(-1)^g\frac{q+1}{q}L(q^{-1},\chi_P^+)L(q^{-1},\chi_P^-)H_{P,0}(q^{-1})
\end{split}
\end{align}
\end{prop}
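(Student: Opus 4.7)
The plan is to express the sum over $\F_g$ as a combination of the quantities $N_{k_1,k_2}(d;P)$ from Lemma \ref{fixedprimelem}, apply that asymptotic to extract the main term, and then correct for the degenerate triples that are counted by the unrestricted Dirichlet sum but excluded in passing from $\widehat{\F}_g$ to $\F_g$.

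First, I would unwind the definition of $L(d_1,d_2,d_3)$ in \eqref{genfunL}: the equation $L(f_1,f_2,f_3) = g+3$ splits into exactly two cases, namely (a) $\deg(f_1 f_2 f_3) = g+3$ with $\deg(f_1 f_3) \equiv \deg(f_2 f_3) \equiv 0 \pmod{2}$ (the case $(k_1,k_2)=(0,0)$), or (b) $\deg(f_1 f_2 f_3) = g+2$ with $(k_1, k_2) \in \{(0,1), (1,0), (1,1)\}$. Dropping the exclusion of the degenerate degree multisets for the moment, the sum of $\chi_P(f_1 f_2)$ over all monic squarefree pairwise coprime triples with $L = g+3$ therefore equals
\[
N_{0,0}(g+3;P) + N_{0,1}(g+2;P) + N_{1,0}(g+2;P) + N_{1,1}(g+2;P).
\]

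I would then substitute the asymptotic from Lemma \ref{fixedprimelem} and simplify. Writing $A_\pm$ for $L(q^{-1},\chi_P^\pm)^2 H_{P,\pm}(q^{-1})$ and $B$ for $L(q^{-1},\chi_P^+)L(q^{-1},\chi_P^-)H_{P,0}(q^{-1})$, the identities $(-1)^{g+3} = -(-1)^g$ and $(-1)^{g+2} = (-1)^g$ in \eqref{C_k(P)} give $C_{0,0}(g+3;P) = A_+ + A_- - 2(-1)^g B$, while the three remaining $C_{k_1,k_2}(g+2;P)$ sum to $3 A_+ - A_- - 2(-1)^g B$. Weighting the first by $q^{g+3}$ and the second by $q^{g+2}$, factoring $q^{g+3}/(4q)$ and collecting coefficients yields the combination $(q+3)A_+ + (q-1)A_- - 2(q+1)(-1)^g B$, which is exactly $q \cdot C(g;P)$ from \eqref{C(P)}. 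Hence the unrestricted sum equals $\tfrac{C(g;P)}{4} q^{g+3}$ plus four error terms that combine to $O(q^{(1/2+\epsilon)g})$.

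Finally, I would subtract the contribution of the excluded degenerate triples. For $g$ even, Remark \ref{traceformrem} ensures that none of them satisfy $L = g+3$, matching $1 - \eta_g = 0$. For $g$ odd, a monic triple with degree multiset $\{g+3, 0, 0\}$ or $\{g+2, 0, 0\}$ consists of two entries equal to $1$ and one nontrivial polynomial $f$ with $\deg(f) \in \{g+2,g+3\}$, and a quick parity check using \eqref{genfunL} confirms each such triple is picked up by the unrestricted sum. The triples $(f,1,1)$ and $(1,f,1)$ each contribute $\chi_P(f)$ to the character sum, while $(1,1,f)$ contributes $\chi_P(1)=1$, so the total excluded contribution is
\[
2 \sum_{\deg(f) \in \{g+2,g+3\}} \mu^2(f) \chi_P(f) + \sum_{\deg(f) \in \{g+2,g+3\}} \mu^2(f);
\]
applying \eqref{squarefree} to the second sum gives $\frac{q+1}{q} \cdot \frac{q^{g+3}}{\zeta_q(2)}$, and subtracting this from the unrestricted sum yields the stated formula. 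The main obstacle is the algebraic bookkeeping in combining the four $C_{k_1,k_2}$ terms: the alternating factor $(-1)^d$ must be tracked across $d = g+2$ and $d = g+3$ and combined with the two different weighting powers of $q$ so that the sign collapses cleanly into the single $(-1)^g$ appearing in $C(g;P)$.
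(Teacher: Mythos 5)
Your proposal is correct and follows essentially the same approach as the paper: split $L(f_1,f_2,f_3)=g+3$ into the four $(k_1,k_2,d)$ cases giving $N_{0,0}(g+3;P)+N_{0,1}(g+2;P)+N_{1,0}(g+2;P)+N_{1,1}(g+2;P)$, apply Lemma \ref{fixedprimelem}, and subtract the degenerate triples for odd $g$ with $\sum_{\deg(f)=g+2,g+3}\mu^2(f)(2\chi_P(f)+1)$ evaluated via \eqref{squarefree}. Your explicit bookkeeping of the coefficients (computing $C_{0,0}(g+3;P)=A_++A_--2(-1)^gB$, the sum $3A_+-A_--2(-1)^gB$ for $d=g+2$, and the weighted combination collapsing to $q\,C(g;P)$) is correct and spelled out somewhat more than in the paper, which leaves those algebraic steps implicit.
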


\begin{proof}

First, note that
\begin{align*}
& \sum_{\substack{f_1,f_2,f_3 \\ L(f_1,f_2,f_3)=g+3}} \mu^2(f_1f_2f_3)\chi_P(f_1f_2)\\
& = N_{0,0}(g+3;P) + N_{0,1}(g+2;P)+N_{1,0}(g+2;P) + N_{1,1}(g+2;P) \\
& = \frac{C(g;P)}{4} q^{g+3} + O\left(q^{(\frac{1}{2}+\epsilon)g}\right)
\end{align*}

If $g$ is odd, then by Remark \ref{traceformrem} we need to exclude the cases where $\{\deg(f_1),\deg(f_2),\deg(f_3)\} = \{g+3,0,0\}$ or $\{g+2,0,0\}$.
\begin{align*}
\sum_{(f_1,f_2,f_3)\in\F_g} \chi_P(f_1f_2) & = \sum_{\substack{f_1,f_2,f_3 \\ L(f_1,f_2,f_3)=g+3}} \mu^2(f_1f_2f_3)\chi_P(f_1f_2) - \sum_{\deg(f)=g+2,g+3} \mu^2(f)(2\chi_P(f)+1)\\
& = \frac{C(g;P)}{4} q^{g+3} + O\left(q^{(\frac{1}{2}+\epsilon)g}\right) - 2\sum_{\deg(f)=g+2,g+3} \mu^2(f)\chi_P(f) - \frac{q+1}{q}\frac{q^{g+3}}{\zeta_q(2)}
\end{align*}
where the last term comes from \eqref{squarefree}.

If $g$ is even, then by Remark \ref{traceformrem} we do not need to exclude anything and we get
\begin{align*}
\sum_{(f_1,f_2,f_3)\in\F_g} \chi_P(f_1f_2) & = \sum_{\substack{f_1,f_2,f_3 \\ L(f_1,f_2,f_3)=g+3}} \mu^2(f_1f_2f_3)\chi_P(f_1f_2)\\
& = \frac{C(g;P)}{4} q^{g+3} +  O\left(q^{(\frac{1}{2}+\epsilon)g}\right)
\end{align*}

\end{proof}

Now we will take the result of Proposition \ref{fixedprimeprop} and sum over all primes. Corollary 1.7 of \cite{M3} shows that there exists a quadratic polynomials, $A$, such that
\begin{align}\label{setsize}
|\F_g| = A(g)q^{g+3}+O\left(q^{\left(\frac{1}{2}+\epsilon\right)g}\right).
\end{align}
Hence by \eqref{doublecharsum} we have, for $n<2g-C\log_q(g)$,
$$\frac{3nq^{-n/2}}{|\F_g|}\sum_{\deg(P)=n}\left(2\sum_{\deg(f)=g+2,g+3} \mu^2(f)\chi_P(f) +\frac{q+1}{q}\frac{q^{g+3}}{\zeta_q(2)}\right) = 3\frac{q+1}{q}\frac{q^{g+3}}{\zeta_q(2)|\F_g|} \frac{n\pi_q(n)}{q^{n/2}} + O\left(\frac{1}{g^2}\right).$$

Therefore if $n<2g-C\log_q(g)$, we can rewrite \eqref{errterm2} as
\begin{align}\label{errterm3}
\begin{split}
 \frac{3nq^{g+3-n/2}}{4|\F_g|}\left(\sum_{\deg(P)=n}C(g;P) -4(1-\eta_g)\frac{q+1}{q}\frac{\pi_q(n)}{\zeta_q(2)} \right)+ O\left(\frac{1}{g^2} + \frac{q^{n/2-(1/2-\epsilon)g}}{g^2}\right)
\end{split}
\end{align}
where $\pi_q(n)$ is the number of primes of degree $n$.

So it remains to determine
\begin{align}\label{C(g)}
C(g) := \sum_{\deg(P)=n}C(g;P).
\end{align}

\section{Summing the Primes}

Recently David, Koukoulopoulos, and Smith \cite{DKS} studied sums of Euler products over rational primes and their applications to statistics of elliptic curves. We will use similar techniques as them and apply it to suming Euler products in the fucntion field setting.

\subsection{Framework}\label{framework}

As in \cite{DKS}, we begin with an arbitrary framework. For every prime $Q$ suppose we have a function $\delta(u;Q)$ such that
$$\delta(u;Q) = (c_1\chi_1(Q) + \dots + c_n\chi_n(Q))u^{\deg(Q)} + O\left(|u|^{(1+\eta)\deg(Q)}\right)$$
where $c_i\in\mathbb{C}$, $\chi_i(Q) = \epsilon_i^{\deg(Q)}\tilde{\chi}_i(Q)$ for some root of unity $\epsilon_i$, $\tilde{\chi}_i$ is a quadratic Dirichelt character modulo $D_i$ and $\eta$ is some positive constant.

Define the euler product
$$\Q_{\delta}(u) := \prod_Q \left( 1 + \delta(u;Q) \right).$$
Here the product is an infinite product over all the monic prime polymials in $\Ff_q[X]$. Then, since $\delta(u;Q)\ll u^{\deg(Q)}$ we see that $\Q_{\delta}(u)$ converges for $|u|<q^{-1}$.

\begin{lem}\label{framelem1}

$\Q_{\delta}(u)$ can be analytically extended to the region $|u|<\min(q^{-1/(1+\eta)},q^{-1/2})$.

\end{lem}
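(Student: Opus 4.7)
The plan is to factor $\Q_\delta(u)$ as (a product of $L$-functions) times (a corrective Euler product $H(u)$ that converges in a strictly larger disk than $|u|<q^{-1}$). Concretely, I would define
$$h(u;Q) := (1+\delta(u;Q)) \prod_{i=1}^n \left(1 - \chi_i(Q) u^{\deg(Q)}\right)^{c_i}, \qquad H(u) := \prod_Q h(u;Q),$$
so that formally $\Q_\delta(u) = H(u) \prod_{i=1}^n L(u,\chi_i)^{c_i}$; this identity holds a priori inside $|u|<q^{-1}$, where both sides are given by absolutely convergent products.

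The main step is to establish the local estimate $h(u;Q) = 1 + O(|u|^{\alpha\deg(Q)})$ with $\alpha := \min(2, 1+\eta)$. This follows by expanding each $(1-\chi_i(Q)u^{\deg(Q)})^{c_i}$ to second order via the binomial series and multiplying against the hypothesized expansion of $1+\delta(u;Q)$: the linear-in-$u^{\deg(Q)}$ coefficients on the two sides agree and cancel, and the leftover errors have sizes $|u|^{2\deg(Q)}$ and $|u|^{(1+\eta)\deg(Q)}$ respectively. Combined with the prime polynomial theorem \eqref{PPNT}, this gives absolute convergence of $H(u)$ on $|u|<q^{-1/\alpha}$, where it defines an analytic function.

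To finish, I would analyze the $L$-factors. Since each $\tilde\chi_i$ is a nontrivial quadratic character, $L(u,\chi_i) = L(\epsilon_i u, \tilde\chi_i)$ is a polynomial by \eqref{Lfunc2} whose zeros all lie on $|u|=q^{-1/2}$ by the Riemann Hypothesis; since the disk $|u|<q^{-1/2}$ is simply connected with $L(0,\chi_i)=1$, a canonical branch of $L(u,\chi_i)^{c_i}$ is holomorphic there, even for non-integer $c_i$. Thus $\Q_\delta(u) = H(u)\prod_i L(u,\chi_i)^{c_i}$ extends analytically to $|u|<\min(q^{-1/\alpha}, q^{-1/2})$, and checking the cases $\eta\leq 1$ and $\eta\geq 1$ separately shows this equals $\min(q^{-1/(1+\eta)}, q^{-1/2})$.

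The only genuine technical point is the cancellation in the local estimate; the rest is a standard ``remove bad Euler factors'' maneuver. The conceptual role of the Riemann Hypothesis is to guarantee uniformly that no $L$-factor vanishes inside $|u|<q^{-1/2}$, which is precisely what allows the non-integer powers $L(u,\chi_i)^{c_i}$ to be interpreted as well-defined analytic functions.
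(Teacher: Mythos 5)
Your proof is correct and follows essentially the same route as the paper: factor $\Q_{\delta}(u)$ as $\prod_i L(\epsilon_i u,\tilde{\chi}_i)^{c_i}$ times a corrective Euler product whose local factors are $1+O\bigl(|u|^{\min(2,1+\eta)\deg(Q)}\bigr)$, then use the Riemann Hypothesis to keep the $L$-factors non-vanishing (hence the non-integer powers well defined) in $|u|<q^{-1/2}$. The only nitpick is that $L(u,\chi_{D})$ can also have a trivial zero at $u=1$ when $\deg(D)$ is even, but since $|1|>q^{-1/2}$ this does not affect the argument; your explicit treatment of the branch issue for non-integer $c_i$ is a detail the paper leaves implicit.
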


\begin{proof}
By definition,
\begin{align*}
\Q_{\delta}(u) & = \prod_Q \left(1+(c_1\chi_1(Q) + \dots + c_n\chi_n(Q))u^{\deg(Q)} + O\left(u^{(1+\eta)\deg(Q)}\right)\right)\\
& = \prod_{i=1}^n \prod_Q\left(1 - \chi_i(Q) u^{\deg(Q)}\right)^{-c_i} \prod_Q \left(1 + O\left(u^{\min(1+\eta,2)\deg(Q)}\right)\right) \\
& = \prod_{i=1}^n L(\epsilon_iu,\tilde{\chi}_i)^{c_i} H(u).
\end{align*}
Since the $\tilde{\chi}_i$ are Dirichlet characters, we get that $L(\epsilon_iu,\tilde{\chi}_i)$ converges and is non-zero in the region $|u|<q^{-1/2}$. Moreover, by the Euler product expansion, we see that $H(u)$ converges for $|u|<\min(q^{-1/(1+\eta)},q^{-1/2})$ which proves the lemma.
\end{proof}

For simplicity we use the same notation, $\Q_{\delta}(u)$, to denote the analytic continuation as well.

For any $M$, define
$$\Q_{\delta}^{(M)}(u) = \prod_{\deg(Q)\leq M} \left(1+\delta(u;Q) \right).$$
Here, the product is over all monic prime polynomials in $\Ff_q[X]$ such that $\deg(Q)\leq M$. This is a finite product so it converges for all $u$.

We wish to approximate $\Q_{\delta}$ with $\Q_{\delta}^{(M)}$.

\begin{lem}\label{framelem2}
For $|u|<\min(q^{-1/(1+\eta)},q^{-1/2})$,
$$\Q_{\delta}(u) = \Q_{\delta}^{(M)}(u)\left\{1 + O\left(\frac{\left(q^{1/2}|u|\right)^M}{M} + \frac{\left(q|u|^{1+\eta}\right)^M}{M}\right)\right\}$$
where the implied constant depends on $c_i$ and $\chi_i$.

\end{lem}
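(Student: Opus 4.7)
The plan is to estimate the ratio $\Q_\delta(u)/\Q_\delta^{(M)}(u)$ by taking logarithms and bounding the tail sum over primes of degree larger than $M$. In the smaller disc $|u|<q^{-1}$ everything converges absolutely, so I may write
$$\log\frac{\Q_\delta(u)}{\Q_\delta^{(M)}(u)} = \sum_{\deg Q > M}\log(1+\delta(u;Q)) = \sum_{\deg Q > M}\delta(u;Q) + O\bigg(\sum_{\deg Q > M}|\delta(u;Q)|^2\bigg).$$
Once the bound in the statement is proved in this smaller region, I would extend it by analytic continuation to the full region $|u|<\min(q^{-1/(1+\eta)},q^{-1/2})$ supplied by Lemma \ref{framelem1}, where both $\Q_\delta(u)$ and the finite product $\Q_\delta^{(M)}(u)$ are holomorphic (and the latter nonvanishing near the origin).

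Next, I would split the linear tail according to the structure of $\delta$: writing $\delta(u;Q) = \sum_i c_i\chi_i(Q)u^{\deg Q} + r(u;Q)$ with $r(u;Q)=O(|u|^{(1+\eta)\deg Q})$, I would group by degree and apply the Riemann hypothesis bound \eqref{Riemhypbound} to each character $\tilde{\chi}_i$ (which is a quadratic Dirichlet character of some modulus $D_i$), getting
$$\bigg|\sum_{\deg Q > M}\sum_i c_i\chi_i(Q)u^{\deg Q}\bigg| \ll \sum_{m>M}\frac{\deg(D_i)}{m}(q^{1/2}|u|)^m \ll \frac{(q^{1/2}|u|)^M}{M},$$
where the last step uses that $q^{1/2}|u|$ stays bounded away from $1$. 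For the remainder $r(u;Q)$, I would count primes of each degree via the Prime Polynomial Theorem \eqref{PPNT}, so that
$$\sum_{\deg Q > M}|r(u;Q)| \ll \sum_{m > M}\frac{q^m}{m}|u|^{(1+\eta)m} \ll \frac{(q|u|^{1+\eta})^M}{M},$$
which is finite precisely for $|u|<q^{-1/(1+\eta)}$. Finally, the quadratic tail is bounded by $\sum_{m>M}(q|u|^2)^m/m \ll (q|u|^2)^M/M$, which is dominated by the first error term because $q|u|^2=(q^{1/2}|u|)^2$ and $q^{1/2}|u|<1$.

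Adding the three contributions, $\log(\Q_\delta(u)/\Q_\delta^{(M)}(u)) = O\bigl((q^{1/2}|u|)^M/M + (q|u|^{1+\eta})^M/M\bigr)$, and exponentiating (using $e^x = 1+O(x)$ for bounded $x$) gives the lemma. I do not expect any serious obstacle: the only care needed is to keep track of implied constants, which depend on the finite data $c_i$, $\epsilon_i$, and $\deg(D_i)$ but not on $u$ or $M$, and to verify that the analytic continuation step is legitimate — but this is automatic since both sides of the identity are holomorphic on the target region and agree on a non-empty open subset.
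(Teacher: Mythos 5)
Your approach matches the paper's: write the ratio as a tail product over primes of degree exceeding $M$, take logarithms, split the linear term into a character part handled by \eqref{Riemhypbound} and an $O\bigl(|u|^{(1+\eta)\deg Q}\bigr)$ remainder handled by \eqref{PPNT}, bound the higher-order terms trivially as $\ll (q|u|^2)^M/M$, and exponentiate. One small caveat: your framing device of first proving the estimate on $|u|<q^{-1}$ and then ``extending by analytic continuation'' is both unnecessary and not quite legitimate, since the statement is an $O$-bound rather than an identity between holomorphic functions; but this does not matter, because your tail estimates are in fact valid directly on all of $|u|<\min(q^{-1/(1+\eta)},q^{-1/2})$, which is exactly what the paper does.
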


\begin{proof}
We clearly have
$$\Q_{\delta}(u) = \Q_{\delta}^{(M)}(u) \prod_{\deg(Q)>M} \left(1 + \delta(u;Q)\right).$$

So it is enough to bound the tail of $\Q_{\delta}(u)$. We do this by examining its logarithm:
\begin{align}\label{alln}
\log \prod_{\deg(Q)>M} \left(1 + \delta(u;Q)\right) = \sum_{\deg(Q)>M} \sum_{n=1}^{\infty} \frac{(-1)^{n+1}\delta(u;Q)^n}{n}.
\end{align}
When $n=1$, we get
\begin{align}\label{n=1}
\sum_{\deg{Q}>M}\delta(u;Q) = \sum_{i=1}^n c_i \sum_{\deg(Q)>M}\chi_i(Q)u^{\deg(Q)} + O\left(\sum_{\deg{Q}>M}|u|^{(1+\eta)\deg(Q)}\right)
\end{align}
Applying \eqref{Riemhypbound} we get that
$$\sum_{\deg(Q)=m} \chi_i(Q) = O\left( \frac{\deg(D_i)}{m}q^{m/2} \right).$$
Therefore as long as $|u|< q^{-1/2}$, the main term in \eqref{n=1} is
\begin{align*}
\sum_{i=1}^n c_i \sum_{m=M+1}^{\infty}\sum_{\deg(Q)=m}\chi_i(Q)u^{\deg(Q)} & \ll \sum_{i=1}^n |c_i| \sum_{m=M+1}^{\infty}  \frac{\deg(D_i)}{m}q^{m/2} |u|^m\\
& \ll \sum_{i=1}^n |c_i|\deg(D_i) \sum_{m=M+1}^{\infty}  \frac{\left(q^{1/2}|u|\right)^m}{m} \\
& \ll \sum_{i=1}^n |c_i|\deg(D_i)  \frac{\left(q^{1/2}|u|\right)^M}{M}.
\end{align*}
Whereas as long as $|u|<q^{-1/(1+\eta)}$, applying \eqref{PPNT} the error term in $\eqref{n=1}$ is
$$\sum_{\deg{Q}>M}|u|^{(1+\eta)\deg(Q)} \ll \sum_{m=M+1}^{\infty} \frac{\left(q|u|^{1+\eta}\right)^m}{m} \ll \frac{\left(q|u|^{1+\eta}\right)^M}{M}$$

Using the bound $\delta(u;Q) \ll |u|^{\deg(Q)}$, when $n\geq 2$ in \eqref{alln} we have
\begin{align*}
\sum_{\deg(Q)>M} \sum_{n=2}^{\infty} \frac{(-1)^{n+1}\delta(u;Q)^n}{n} & \ll \sum_{n=2}^{\infty} \frac{1}{n}\sum_{m=M+1}^{\infty}\frac{\left(q|u|^n\right)^m}{m} \\
& \ll \frac{q^M}{M}\sum_{n=2}^{\infty} \frac{|u^{nM}|}{n} \ll \frac{(qu^2)^M}{M}
\end{align*}
as long as $|u|<q^{-1/2}$.

Therefore, we see that
\begin{align*}
\prod_{\deg(Q)>M} \left(1 + \delta(u;Q)\right) = e^{O\left(\frac{\left(q^{1/2}|u|\right)^M}{M}+\frac{\left(q|u|^{1+\eta}\right)^M}{M}\right)} = 1 + O\left(\frac{\left(q^{1/2}|u|\right)^M}{M}+\frac{\left(q|u|^{1+\eta}\right)^M}{M}\right)
\end{align*}
\end{proof}

\begin{cor}\label{framecor1}
For $|u|<\min(q^{-1/(1+\eta)},q^{-1/2})$,
$$\Q_{\delta}(u) = \Q_{\delta}^{(M)}(u)+ O\left(\frac{\left(q^{1/2}|u|\right)^M}{M} + \frac{\left(q|u|^{1+\eta}\right)^M}{M}\right)$$
where the implied constant depends on $c_i$ and $\chi_i$.
\end{cor}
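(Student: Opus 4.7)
The plan is to deduce the additive bound directly from the multiplicative bound in Lemma \ref{framelem2}. Rewriting that lemma, we have
$$\Q_{\delta}(u) - \Q_{\delta}^{(M)}(u) = \Q_{\delta}^{(M)}(u)\cdot O\!\left(\frac{(q^{1/2}|u|)^M}{M}+\frac{(q|u|^{1+\eta})^M}{M}\right),$$
so the corollary reduces to showing that $|\Q_{\delta}^{(M)}(u)|$ is bounded uniformly in $M$ for every fixed $u$ with $|u|<\min(q^{-1/(1+\eta)},q^{-1/2})$.

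To get this uniform bound I would argue in two regimes. For $M$ sufficiently large, Lemma \ref{framelem2} gives $\Q_{\delta}^{(M)}(u) = \Q_{\delta}(u)(1+o(1))$, and $\Q_{\delta}(u)$ is a fixed finite quantity by Lemma \ref{framelem1}; so $|\Q_{\delta}^{(M)}(u)|$ is bounded. For the remaining (finitely many) small values of $M$, $\Q_{\delta}^{(M)}(u)$ is a finite product and trivially bounded. Alternatively, one can give a single uniform estimate via
$$\log|\Q_{\delta}^{(M)}(u)|\le \sum_{\deg(Q)\le M}\log(1+|\delta(u;Q)|)\le \sum_{\deg(Q)\le M}|\delta(u;Q)|+O(1),$$
and the first sum on the right is uniformly bounded in $M$ by the same character-sum estimate \eqref{Riemhypbound} and Prime Polynomial Theorem \eqref{PPNT} inputs already used in the proof of Lemma \ref{framelem2}, provided $|u|<\min(q^{-1/(1+\eta)},q^{-1/2})$.

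Combining the two displays gives
$$\Q_{\delta}(u)-\Q_{\delta}^{(M)}(u)=O\!\left(\frac{(q^{1/2}|u|)^M}{M}+\frac{(q|u|^{1+\eta})^M}{M}\right),$$
with an implied constant depending on the $c_i$ and $\chi_i$ (via the uniform bound on $\Q_{\delta}^{(M)}(u)$), which is exactly the statement of the corollary. There is no real obstacle here; the only subtle point is that the implied constant absorbs not only the $c_i,\chi_i$ dependence already present in Lemma \ref{framelem2} but also the bound on $|\Q_{\delta}^{(M)}(u)|$, which I would emphasize so the reader sees the dependence is unchanged.
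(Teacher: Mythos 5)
Your overall strategy matches the paper exactly: both reduce the corollary to showing that $\Q_{\delta}^{(M)}(u)=O(1)$ uniformly in $M$, and then multiply out Lemma~\ref{framelem2}. Your first argument for the uniform bound (apply Lemma~\ref{framelem2} once $M$ is large, treat the finitely many small $M$ trivially) is correct and is a perfectly serviceable variant; the paper instead bounds $\log\Q_{\delta}^{(M)}(u)$ directly by the method of Lemma~\ref{framelem2}.

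Your ``alternative'' argument, however, has a real gap. You pass to
$$\log|\Q_{\delta}^{(M)}(u)|\le \sum_{\deg(Q)\le M}\log\bigl(1+|\delta(u;Q)|\bigr)\le \sum_{\deg(Q)\le M}|\delta(u;Q)|,$$
and then claim the right-hand side is bounded uniformly in $M$ using \eqref{Riemhypbound}. But once you put absolute values \emph{inside} $\delta$, the character cancellation from \eqref{Riemhypbound} is gone: $|\delta(u;Q)|\asymp |u|^{\deg Q}$, so $\sum_{\deg Q\le M}|\delta(u;Q)|\gg\sum_{m\le M}\pi_q(m)|u|^m\asymp\sum_{m\le M}(q|u|)^m/m$, which diverges as $M\to\infty$ throughout the range $q^{-1}\le|u|<q^{-1/2}$. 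The estimate \eqref{Riemhypbound} only helps if it is applied to the \emph{signed} sums $\sum_{\deg Q=m}\chi_i(Q)u^m$ appearing inside $\sum_{\deg Q\le M}\delta(u;Q)$, as in the proof of Lemma~\ref{framelem2}. So the second route must be run without the triangle inequality at the level of $\delta$: write $\log\Q_{\delta}^{(M)}(u)=\sum_{\deg Q\le M}\log(1+\delta(u;Q))$, expand the logarithm, and apply \eqref{Riemhypbound} to the $k=1$ character sums directly. With that correction the argument agrees with what the paper does; as it stands, the inequality you wrote down is true but its right-hand side is not $O(1)$ on the full stated range of $u$. Your first argument, which you should keep as the actual proof, does not have this problem.
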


\begin{proof}

The same method as in Lemma \ref{framelem2} shows that
$$\Q_{\delta}^{(M)}(u) = e^{O\left(\left(q^{1/2}|u|\right)^M+\left(q|u|^{1+\eta}\right)^M\right)}.$$
Therefore, as long as $|u|<\min(q^{-1/(1+\eta)},q^{-1/2})$ we have $\Q_{\delta}^{(M)}(u) = O(1)$ and the result follows from Lemma \ref{framelem2}.

\end{proof}

\subsection{Application of Framework}

Equations \eqref{Hpmdef} and \eqref{H0def} imply that
$$L(u,\chi_P^{\pm})^2H_{\pm}(u) = \prod_Q \left( 1 + 2\chi_P^{\pm}(Q)u^{\deg(Q)} - \left(1+2\chi_P^{\pm}(Q)\right)u^{2\deg(Q)} \right)$$
and
$$L(u,\chi_P^+)L(u,\chi_P^-)H_0(u) = \prod_Q \left( 1 + \left(\chi_P^{+}(Q)+\chi_P^{-}(Q)\right)u^{\deg(Q)} - \left(1+\chi_P^{+}(Q)+\chi_P^-(Q)\right)u^{2\deg(Q)} \right).$$

Therefore, if we define
$$\delta_{P,\pm}(u;Q) = 2\chi_P^{\pm}(Q)u^{\deg(Q)} - \left(1+2\chi_P^{\pm}(Q)\right)u^{2\deg(Q)} $$
and
$$\delta_{P,0}(u;Q) = \left(\chi_P^{+}(Q)+\chi_P^{-}(Q)\right)u^{\deg(Q)} - \left(1+\chi_P^{+}(Q)+\chi_P^-(Q)\right)u^{2\deg(Q)}$$
we get that
$$L(u,\chi_P^{\pm})^2H_{\pm}(u) = \Q_{\delta_{P,\pm}}(u)$$
and
$$L(u,\chi_P^+)L(u,\chi_P^-)H_0(u) = \Q_{\delta_{P,0}}(u).$$

\begin{prop}\label{frameapprop}
Let $* = +,-$ or $0$. Then for any $M>0$,
$$\sum_{\deg(P)=n} \Q_{\delta_{P,*}}(q^{-1}) = \frac{\pi_q(n)}{\zeta_q(2)} + O\left(\frac{q^{n-2M}}{n} + \frac{q^{n/2}M^3}{n} + \frac{q^n}{nMq^{M/2}}\right)$$
\end{prop}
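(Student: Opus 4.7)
The plan is to truncate the Euler product at primes of degree at most $M$ via the framework of Section 6, expand the truncation as a sum over square-free divisors, and carefully sum over primes $P$ of degree $n$. I describe the argument for $* = +$; the cases $* = -$ and $* = 0$ follow by analogous manipulations, since $\chi_P^-(Q) = (-1)^{\deg Q}\chi_P(Q)$ and the character part of $\delta_{P,0}$ vanishes on primes of odd degree, so the resulting expansions are structurally identical up to minor sign adjustments.

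For the truncation, I apply Corollary \ref{framecor1} at $u = q^{-1}$ with $\eta = 1$ to replace $\Q_{\delta_{P,+}}(q^{-1})$ by $\Q^{(M)}_{\delta_{P,+}}(q^{-1})$. Since the implied constant in Lemma \ref{framelem2} is proportional to $|c_i|\deg(D_i) = 2n$, the per-$P$ error is $O(n q^{-M/2}/M)$. To beat the naive $\pi_q(n)$-fold sum, I return to the internals of the proof of Lemma \ref{framelem2}: the dominant tail contribution is $\sum_{\deg Q > M} 2\chi_P(Q)q^{-\deg Q}$. Swapping summations, applying quadratic reciprocity in $\Ff_q[X]$ to write $\chi_P(Q) = \pm\chi_Q(P)$ (with signs depending on $n$, $\deg Q$, and $(q-1)/2$), and invoking \eqref{Riemhypbound} for each fixed $Q$ recovers the factor $1/n$ and produces the third error term $O(q^n/(nMq^{M/2}))$.

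Next I factor each Euler factor as
\[
1 + \delta_{P,+}(q^{-1}; Q) = (1 - q^{-2\deg Q})\bigl(1 + \alpha(Q)\chi_P(Q)\bigr), \qquad \alpha(Q) := \frac{2q^{-\deg Q}}{1 + q^{-\deg Q}},
\]
so that
\[
\Q^{(M)}_{\delta_{P,+}}(q^{-1}) = A_M \sum_m \alpha(m)\chi_P(m), \qquad A_M := \prod_{\deg Q \leq M}(1 - q^{-2\deg Q}), \qquad \alpha(m) := \prod_{Q \mid m}\alpha(Q),
\]
with $m$ ranging over square-free monic polynomials whose prime factors all have degree at most $M$. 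Summing over $\deg P = n$, the $m = 1$ term contributes $A_M \pi_q(n)$; comparing this with $\pi_q(n)/\zeta_q(2)$ via a tail estimate on $1/\zeta_q(2) - A_M$, using that any square-free polynomial with a prime factor of degree $> M$ has itself degree $> M$, gives the first error $O(q^{n-2M}/n)$.

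For the $m \neq 1$ contributions, quadratic reciprocity gives $\chi_P(m) = \pm\chi_m(P)$, so by \eqref{Riemhypbound} we have $|\sum_{\deg P = n}\chi_P(m)| \ll \deg(m)q^{n/2}/n$. The total $m \neq 1$ contribution is bounded by $(q^{n/2}/n)\sum_m \alpha(m)\deg(m) \ll q^{n/2}M^3/n$, where the estimate $\sum_m \alpha(m)\deg(m) \ll M^3$ follows from a logarithmic-derivative computation: writing $F(x) := \prod_{\deg Q \leq M}(1 + \alpha(Q)x^{\deg Q})$ so that $\sum_m \alpha(m)\deg(m) = F'(1)$, the Prime Polynomial Theorem yields $\sum_{\deg Q \leq M}\alpha(Q) \sim 2\log M$, hence $F(1) \ll M^2$ and $F'(1)/F(1) \ll M$. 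The main technical obstacles are the sharper bookkeeping in the truncation step --- extracting the $1/n$ factor from the tail via reciprocity rather than losing it --- and the refined tail bound in the main-term comparison that produces the advertised $q^{-2M}$ dependence.
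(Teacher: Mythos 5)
Your overall strategy --- truncate the Euler product at degree $M$ via Corollary~\ref{framecor1}, expand the truncation over square-free moduli supported on primes of degree at most $M$, extract the $m=1$ term as the main term, and bound the oscillating $m \neq 1$ terms via quadratic reciprocity together with \eqref{Riemhypbound} --- is exactly the paper's. Your factorization $1 + \delta_{P,*}(q^{-1};Q) = (1 - q^{-2\deg Q})(1 + \alpha(Q)\chi_P^{*}(Q))$, which collapses the paper's five-fold expansion over $F, G, D, D_1, D_2$ into a single sum over square-free $m$, is a genuine simplification, and the logarithmic-derivative argument giving $\sum_m \alpha(m)\deg(m) \ll M^3$ is a clean route to the same $M^3$ bound the paper obtains for $S_2$.

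The gap is in your treatment of the truncation error. You are right that Corollary~\ref{framecor1}'s implied constant carries a factor $\deg(D_i) = \deg(P) = n$, so the per-$P$ bound is $O(n/(Mq^{M/2}))$ and the naive sum over $\pi_q(n)$ primes gives $O(q^n/(Mq^{M/2}))$, a factor of $n$ worse than the stated third error term. But swapping the sums and applying reciprocity plus \eqref{Riemhypbound} does \emph{not} recover the missing $1/n$: for a fixed $m>M$, the contribution from the $\pi_q(m)$ primes $Q$ of degree $m$ is $\ll \pi_q(m)\cdot q^{-m}\cdot(m/n)q^{n/2} \asymp q^{n/2}/n$, which is independent of $m$, so the sum over $m>M$ diverges. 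The bound \eqref{Riemhypbound} only beats the trivial bound $\pi_q(n)$ when $\deg Q \ll q^{n/2}$, and for $M$ comparable to $n$ (the regime where the proposition is applied, with $M=n$) essentially all of the tail $\deg Q > M$ lies outside that range. For what it is worth, the paper itself also applies Corollary~\ref{framecor1} as though the implied constant were independent of $P$, and the weaker $O(q^n/(Mq^{M/2}))$ is still $o(1)$ in the final theorem after multiplying by $3nq^{g+3-n/2}/(4|\F_g|)$ and setting $M=n$, so this does not threaten Theorem~\ref{mainthm}; but as a proof of the proposition as stated, the step does not close. A similar slip occurs in your $m=1$ comparison: the tail of $\prod_Q(1-|Q|^{-2})$ beyond degree $M$ is $O(q^{-M}/M)$, not $O(q^{-2M})$ --- there are $\sim q^{M+1}/(M+1)$ primes of degree $M+1$, each contributing $q^{-2(M+1)}$ --- so the first error should read $O(q^{n-M}/(nM))$. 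Again the paper makes the same claim, and again it is harmless at $M=n$, but the $q^{-2M}$ is not justified by the observation that such polynomials have degree exceeding $M$.
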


\begin{proof}

Applying Corollary \ref{framecor1} we obtain
$$\Q_{\delta_{P,*}}(q^{-1}) = \Q_{\delta_{P,*}}^{(M)}(q^{-1}) + O\left(\frac{1}{Mq^{M/2}}\right).$$
Therefore by \eqref{PPNT},
$$\sum_{\deg(P)=n} \Q_{\delta_{P,*}}(q^{-1})= \sum_{\deg(P)=n}\Q_{\delta_{P,*}}^{(M)}(q^{-1}) + O\left(\frac{q^n}{nMq^{M/2}}\right)$$
so we will work only with the truncated product.

Let $*_1,*_2$ be either of $+$ or $-$ then we can write
\begin{align}\label{frameappeq1}
\sum_{\deg(P)=n}\Q_{\delta_{P,*}}^{(M)}(q^{-1}) = \sum_{\deg(P)=n}\prod_{\deg{Q}\leq M} \left( 1 + \frac{\chi_P^{*_1}(Q)+\chi_P^{*_2}(Q)}{|Q|} - \frac{1+\chi_P^{*_1}(Q)+\chi_P^{*_2}(Q)}{|Q|^2}\right)
\end{align}
where for any polynomial $F\in\Ff_q[X]$, $|F|=q^{\deg(F)}$.

Since the product is finite, we can expand it without worrying about conditional convergence. Denote $P^+(F)$ as the largest prime divisor of $F$. Then we can rewrite \eqref{frameappeq1} as
$$\sum_{\deg(P)=n} \sum_{\substack{\deg(P^+(F))\leq M \\ \deg(P^+(G))\leq M}} \mu^2(FG) \prod_{Q|F} \left( \frac{\chi_P^{*_1}(Q)+\chi_P^{*_2}(Q)}{|Q|}\right) \prod_{Q|G} \left(- \frac{1+\chi_P^{*_1}(Q)+\chi_P^{*_2}(Q)}{|Q|^2}\right)$$
$$ = \sum_{\deg(P)=n} \sum_{\substack{\deg(P^+(F))\leq M \\ \deg(P^+(G))\leq M}} \mu^2(FG) \frac{1}{|F|}\sum_{D|F} \chi_P^{*_1}(D)\chi_P^{*_2}(F/D) \frac{\mu(G)}{|G|^2}\sum_{D_1D_2|G}\chi_P^{*_1}(D_1)\chi_P^{*_2}(D_2)$$
$$ = \sum_{\substack{\deg(P^+(F))\leq M \\ \deg(P^+(G))\leq M}}  \frac{\mu^2(FG)\mu(G)}{|FG^2|}\sum_{D|F}\sum_{D_1D_2|G} \sum_{\deg(P)=n}\chi_P^{*_1}(DD_1)\chi_P^{*_2}(D_2F/D)  $$

Let $S_1$ be the partial sum where $FD_1D_2=1$ and $S_2$ the remainder. Then,
$$S_1 = \sum_{\deg(P^+(G))\leq M} \frac{\mu(G)}{|G|^2}\sum_{\deg(P)=n} 1 = \pi_q(n) \prod_{\deg{Q}\leq M} \left(1-\frac{1}{|Q|^2}\right).$$

By the same method as in Section \ref{framework} we see that
$$\prod_{\deg{Q}\leq M} \left(1-\frac{1}{|Q|^2}\right) = \prod_{Q} \left(1-\frac{1}{|Q|^2}\right) + O\left(\frac{1}{q^{2M}}\right) = \frac{1}{\zeta_q(2)}+ O\left(\frac{1}{q^{2M}}\right).$$
Therefore,
$$S_1 = \frac{\pi_q(n)}{\zeta_q(2)}+ O\left(\frac{q^{n-2M}}{n}\right).$$

We notice that for a fixed $F,D_1,D_2$,
$$\chi_P^{*_1}(DD_1)\chi_P^{*_2}(D_2F/D) = \pm \chi_P(FD_1D_2).$$
Further, by quadratic reciprocity we have
\begin{align*}
\chi_P(FD_1D_2) = \left(\frac{FD_1D_2}{P}\right) & = (-1)^{\frac{q-1}{2}\deg(FD_1D_2P)} \left(\frac{P}{FD_1D_2}\right) \\
&  = (-1)^{\frac{q-1}{2}\deg(FD_1D_2P)} \chi_{FD_1D_2}(P).
\end{align*}
The conditions imposed on $F,D_1$ and $D_2$ in the sum imply that $FD_1D_2$ will always be square-free. Thus, $\chi_{FD_1D_2}$ will be a non-trivial quadratic Dirichlet character modulo $FD_1D_2$ as long as $FD_1D_2\not=1$. Therefore for a fixed $F,D_1,D_2$, we apply \eqref{Riemhypbound} to get
$$\sum_{\deg(P)=n}\chi_P^{*_1}(DD_1)\chi_P^{*_2}(D_2F/D) = \pm \sum_{\deg(P)=n}\chi_{FD_1D_2}(P) \ll \frac{\deg(FD_1D_2)q^{n/2}}{n}.$$

Applying this bound we get
$$S_2 =  \sum_{\substack{\deg(P^+(F))\leq M \\ \deg(P^+(G))\leq M}}  \frac{\mu^2(FG)\mu(G)}{|FG^2|}\sum_{D|F}\sum_{\substack{D_1D_2|G \\ FD_1D_2\not=1}} \sum_{\deg(P)=n}\chi_P^{*_1}(DD_1)\chi_P^{*_2}(D_2F/D)$$
$$ \ll  \sum_{\substack{\deg(P^+(F))\leq M \\ \deg(P^+(G))\leq M}}  \frac{\mu^2(FG)}{|FG^2|}\sum_{D|F}\sum_{\substack{D_1D_2|G\\ FD_1D_2\not=1}}\frac{\deg(FD_1D_2)q^{n/2}}{n} $$
$$ \ll \frac{q^{n/2}}{n} \sum_{\deg(P+(F))\leq M} \frac{\mu^2(F)\deg(F)}{|F|}\left(\sum_{D|F}1\right)  \sum_{\deg(P+(G))\leq M} \frac{\mu^2(G)}{|G|^2} \sum_{D_1D_2|G}\deg(D_1D_2)$$

We can bound $\deg(D_1D_2)\ll |D_1D_2|^{\epsilon}$ for any $\epsilon>0$. Moreover, since $G$ is square-free we have $\sum_{D_1D_2|G}1 = 3^{\omega(G)}$ where $\omega(G)$ is the number of prime divisors of $G$. Hence,
$$\sum_{\deg(P+(G))\leq M} \frac{\mu^2(G)}{|G|^2} \sum_{D_1D_2|G}\deg(D_1D_2) \ll \sum_{\deg(P+(G))\leq M} \frac{\mu^2(G)3^{\omega(G)}}{|G|^{2-\epsilon}}$$
$$ \ll \prod_{\deg(Q)\leq M} \left(1+\frac{3}{|Q|^{2-\epsilon}}\right) \leq \prod_{Q} \left(1+\frac{3}{|Q|^{2-\epsilon}}\right) = O(1). $$

Again, since $F$ is square-free, we can write $\sum_{D|F}1 = 2^{\omega(F)}$ where $\omega(F)$ is the number of prime divisors of $F$ and we can use the more precise bound $\deg(F)\leq M|F|^{1/M}$. Hence,
$$\sum_{\deg(P+(F))\leq M} \frac{\mu^2(F)\deg(F)}{|F|}\left(\sum_{D|F}1\right) \leq M \sum_{\deg(P+(F))\leq M} \frac{\mu^2(F)2^{\omega(F)}}{|F|^{1-1/M}}$$
$$ = M \prod_{\deg(Q)\leq M} \left(1 + \frac{2}{|Q|^{1-1/M}}\right) \ll M \exp\left(\sum_{\deg(Q)\leq M} \frac{2}{|Q|^{1-1/M}} \right)$$
where the last inequality can be shown using the methods of Section \ref{framework}.

Finally, since $\deg(Q)\leq M$, we have $|Q|^{1/M} = 1+O\left(\deg(Q)/M\right)$ and
\begin{align*}
\sum_{\deg(Q)\leq M} \frac{1}{|Q|^{1-1/M}}&  = \sum_{\deg{Q}\leq M} \frac{1}{|Q|} + O\left(\frac{1}{M}\sum_{\deg{Q}\leq M} \frac{\deg(Q)}{|Q|}\right)\\
& = \sum_{m=1}^M \frac{\pi_q(m)}{q^m} + O\left(\frac{1}{M} \sum_{m=1}^M \frac{m\pi_q(m)}{q^m}\right)\\
& = \sum_{m=1}^M \frac{1}{m} + O\left(\sum_{m=1}^M \frac{1}{q^{m/2}} + \frac{1}{M}\sum_{m=1}^M 1\right)\\
& = \log(M) +O(1).
\end{align*}

Therefore, we conclude that $S_2 \ll \frac{q^{n/2}}{n} M^3$.

\end{proof}

\section{Proof of Theorem \ref{mainthm}}

The case of odd $n$ was already proved in Section \ref{Oddn}.

For even $n$, combining equations \eqref{traceform2}, \eqref{roots}, \eqref{nongenval3} and \eqref{errterm2} implies that for $n\geq3\log_q(g)$,
$$\langle\Tr(\Theta_C^n)\rangle_{\Hh_{g,\biq}} = -3 + \frac{3nq^{-n/2}}{|\F_g|} \sum_{\deg(P)=n} \sum_{(f_1,f_2,f_3)\in\F_g} \chi_P(f_1f_2) + o\left(1\right)$$
as $g\to\infty$. Equation \eqref{errterm3} states that if $n<2g-C\log_q(g)$, then the above double sum is equal to
\begin{align*}
\frac{3nq^{g+3-n/2}}{4|\F_g|}\left(\sum_{\deg(P)=n}C(P,g) - 4 (1-\eta_g)\frac{q+1}{q}\frac{\pi_q(n)}{\zeta_q(2)}\right) + O\left(\frac{1}{g^2} + \frac{q^{n/2-(1/2-\epsilon)g}}{g^2}\right)
\end{align*}
where $C(P;g)$ is defined in \eqref{C(P)}. Applying Proposition \ref{frameapprop} we get
\begin{align*}
\sum_{\deg(P)=n}C(P;g) & =  \frac{q+3}{q} \sum_{\deg(P)=n} L(q^{-1},\chi_P^+)^2H_{P,+}(q^{-1}) + \frac{q-1}{q} \sum_{\deg(P)=n} L(q^{-1},\chi_P^-)^2H_{P,-}(q^{-1}) \\
&  - 2(-1)^g\frac{q+1}{q} \sum_{\deg(P)=n} L(q^{-1},\chi_P^+)L(q^{-1},\chi_P^-)H_{P,0}(q^{-1}) \\
& = \frac{2q+2-(-1)^g(2q+2)}{q} \frac{\pi_q(n)}{\zeta_q(2)} + O\left(\frac{q^{n-2M}}{n} + \frac{q^{n/2}M^3}{n} + \frac{q^n}{nMq^{M/2}}\right)\\
& = 4(1-\eta_g)\frac{q+1}{q}\frac{\pi_q(n)}{\zeta_q(2)}+ O\left(\frac{q^{n-2M}}{n} + \frac{q^{n/2}M^3}{n} + \frac{q^n}{nMq^{M/2}}\right).
\end{align*}
Hence, for any $M>0$,
$$\langle\Tr(\Theta_C^n)\rangle_{\Hh_{g,\biq}} = -3 +o\left(1\right) + O\left(\frac{q^{n/2-(1/2-\epsilon)g}}{g^2} + \frac{q^{\frac{n}{2}-2M}}{g^2} + \frac{M^3}{g^2} + \frac{q^{\frac{n-M}{2}}}{g^2}\right).$$
We wish to choose $M$ so that this error term tends to $0$ as $g$ tends to infinity for as wide a range of $n$ as possible. The term
$$\frac{q^{n/2-(1/2-\epsilon)g}}{g^2}$$
means that we must have $n<(1-\epsilon)g$ for some $\epsilon>0$.  The terms
$$ \frac{q^{\frac{n}{2}-2M}}{g^2} \mbox{ and } \frac{q^{\frac{n-M}{2}}}{g^2}$$
will tend to $0$ for large $n$ only if $M\geq n$. So setting $M=n$, we see that
$$\frac{M^3}{g^2} = \frac{n^3}{g^2}$$
will tend to $0$ only if $n=o(g^{2/3})$.

Therefore, if $n$ is even such that $3\log_q(g) \leq n = o(g^{2/3})$ we find that
$$\langle\Tr(\Theta_C^n)\rangle_{\Hh_{g,\biq}} = -3 + o(1)$$
as $g\to\infty$.

Finally, we recall that for $n$ even and $n\leq 2g$, we have
$$\int_{USp(2g)^3}\Tr(U^n)dU = -3$$
which finishes the proof.

\section{One Level Density}\label{density}

Recall, for any even test function $f$ we define
$$F(\theta) := \sum_{n\in\Z} f\left(2g\left( \frac{\theta}{2\pi}-n \right)\right).$$
Let $g(x) = f\left(2g\left(\frac{\theta}{2\pi}-x\right)\right)$, then by Poisson summation formula we have
$$F(\theta) = \sum_{n\in \Z} g(n) = \sum_{n\in\Z} \hat{g}(n) = \sum_{n\in\Z}\int_{-\infty}^{\infty} g(x) e^{-2\pi ixn}dx$$
$$ = \sum_{n\in\Z}\int_{-\infty}^{\infty} f\left(2g\left(\frac{\theta}{2\pi}-x\right)\right) e^{-2\pi ixn}dx = \frac{1}{2g}\sum_{n\in\Z}\int_{-\infty}^{\infty} f(y) e^{-2\pi i\left(\frac{-y}{2g} + \frac{\theta}{2\pi}\right)n}dy$$
$$ = \frac{1}{2g}\sum_{n\in\Z} e^{in\theta}\int_{-\infty}^{\infty} f(y) e^{-2\pi iy \frac{-n}{2g}}dy =\frac{1}{2g}\sum_{n\in\Z} e^{in\theta} \hat{f}\left(\frac{-n}{2g}\right) $$
$$ =\frac{1}{2g}\sum_{n\in\Z} e^{in\theta} \hat{f}\left(\frac{n}{2g}\right) $$
where the last equality comes from the fact that $f$ is even.

For any $U\in USp(2g)$ with eigenvalues $e^{i\theta_j}$, $j=1,\dots,2g$, we let
$$Z_F(U) = \sum_{j=1}^{2g} F(\theta_j).$$

Therefore, if $e^{i\theta_j}$ are the eigenvalues of $U$, we get
$$Z_F(U) =  \sum_{j=1}^{2g} F(\theta_j) = \sum_{j=1}^{2g} \frac{1}{2g}\sum_{n\in\Z} e^{in\theta_j} \hat{f}\left(\frac{n}{2g}\right)  $$
$$ = \hat{f}(0) + \frac{1}{g}\sum_{n=1}^{\infty} \hat{f}\left(\frac{n}{2g}\right) \Tr(U^n).$$

Therefore, if Theorem \ref{mainthm} holds for $n<2\alpha g$ and $f$ is chosen such that $\supp(\hat{f}) \subset (-\alpha,\alpha)$, then
\begin{align*}
\langle Z_F(\Theta_C)\rangle_{\Hh_{g,\biq}} & = \hat{f}(0) + \frac{1}{g}\sum_{n=1}^{2\alpha g} \hat{f}\left(\frac{n}{2g}\right) \langle\Tr(\Theta_C^n)\rangle_{\Hh_{g,\biq}} \\
& =  \hat{f}(0) + \frac{1}{g}\sum_{n=1}^{2\alpha g} \hat{f}\left(\frac{n}{2g}\right) \int_{USp(2g)^3}\Tr(U^n)dU + o(1)\\
& =  \hat{f}(0) + \frac{1}{g}\sum_{n=1}^{\infty} \hat{f}\left(\frac{n}{2g}\right) \int_{USp(2g)^3}\Tr(U^n)dU + o(1)\\
& = \int_{USp(2g)^3} Z_f(U) dU + o(1).
\end{align*}

\bibliography{HPFirstDraft}

\providecommand{\bysame}{\leavevmode\hbox to3em{\hrulefill}\thinspace}
\providecommand{\MR}{\relax\ifhmode\unskip\space\fi MR }
\providecommand{\MRhref}[2]{%
  \href{http://www.ams.org/mathscinet-getitem?mr=#1}{#2}
}
\providecommand{\href}[2]{#2}
\begin{thebibliography}{10}

\bibitem{BCDG+}
Alina Bucur, Edgar Costa, Chantal David, Joao Guerreiro, and David Lowry-Duda,
  \emph{Traces, high powers and one level density for families of curves over
  finite fields}, arXiv preprint arXiv:1610.00164 (2016).

\bibitem{BDFK+}
Alina Bucur, Chantal David, Brooke Feigon, Nathan Kaplan, Matilde Lal{\'\i}n,
  Ekin Ozman, and Melanie~Matchett Wood, \emph{The distribution of 𝔽q-points
  on cyclic ℓ-covers of genus g}, International Mathematics Research Notices
  (2015), rnv279.

\bibitem{BDFL2}
Alina Bucur, Chantal David, Brooke Feigon, and Matilde Lal{\'\i}n,
  \emph{Statistics for traces of cyclic trigonal curves over finite fields},
  International Mathematics Research Notices (2009), rnp162.

\bibitem{BDFL1}
\bysame, \emph{Biased statistics for traces of cyclic p-fold covers over finite
  fields}, WIN--Women in Numbers: Research Directions in Number Theory
  \textbf{60} (2010), 121--143.

\bibitem{C}
Iakovos~Jake Chinis, \emph{Traces of high powers of the frobenius class in the
  moduli space of hyperelliptic curves}, Research in Number Theory \textbf{2}
  (2016), no.~1, 13.

\bibitem{DKS}
Chantal David, Dimitris Koukoulopoulos, and Ethan Smith, \emph{Sums of euler
  products and statistics of elliptic curves}, Mathematische Annalen (2015),
  1--68.

\bibitem{KS}
Nicholas~M Katz and Peter Sarnak, \emph{Random matrices, frobenius eigenvalues,
  and monodromy}, vol.~45, American Mathematical Soc., 1999.

\bibitem{KR}
P{\"a}r Kurlberg and Ze{\'e}v Rudnick, \emph{The fluctuations in the number of
  points on a hyperelliptic curve over a finite field}, Journal of Number
  Theory \textbf{129} (2009), no.~3, 580--587.

\bibitem{LMM}
Elisa Lorenzo, Giulio Meleleo, Piermarco Milione, and Alina Bucur,
  \emph{Statistics for biquadratic covers of the projective line over finite
  fields}, to appear in Journal of Number Theory (2017).

\bibitem{M2}
Patrick Meisner, \emph{Distribution of points on abelian curves over finite
  fields}, arXiv preprint arXiv:1612.03411 (2016).

\bibitem{M3}
\bysame, \emph{Number of points on the full moduli space of curves over finite
  fields}, arXiv preprint arXiv:1612.04137 (2016).

\bibitem{M1}
\bysame, \emph{Distribution of points on cyclic curves over finite fields},
  Journal of Number Theory \textbf{177} (2017), 528--561.

\bibitem{rose}
Michael Rosen, \emph{Number theory in function fields}, vol. 210, Springer
  Science \& Business Media, 2013.

\bibitem{R}
Ze{\'e}v Rudnick, \emph{Traces of high powers of the frobenius class in the
  hyperelliptic ensemble}, arXiv preprint arXiv:0811.3649 (2008).

\bibitem{W}
Andr{\'e} Weil, \emph{Sur les courbes alg{\'e}briques et les vari{\'e}t{\'e}s
  qui s' en d{\'e}duisent}, no. 1041, Hermann, 1948.

\end{thebibliography}
\bibliographystyle{amsplain}

\end{document}